\theoremstyle{definition}
\newtheorem{theo}{Theorem}[section]
\newtheorem{lem}[theo]{Lemma}
\newtheorem{cor}[theo]{Corollary}
\newtheorem{prop}[theo]{Proposition}
\newtheorem{conj}[theo]{Conjecture}
\theoremstyle{definition}
\newtheorem{ass}[theo]{Assumption}
\newtheorem{rem}[theo]{Remark}
\newtheorem{dfn}[theo]{Definition}
\newtheorem{prob}[theo]{Problem}
\newtheorem{ex}[theo]{Example}
\numberwithin{equation}{section}
\newcommand{\mb}[1]{\mathbb{#1}}
\newcommand{\bP}{\mathbb{P}} 
\newcommand{\Q}{\mathbb{Q}}
\newcommand{\cO}{\mathcal{O}}
\newcommand{\witi}{\widetilde}
\newcommand{\bealn}[1]{\begin{align*}#1\end{algin*}}
\newcommand{\maf}[1]{\mathfrak{#1}}
\newcommand{\ovl}[1]{\overline{#1}}
\DeclareMathOperator{\spec}{Spec}
\DeclareMathOperator{\proj}{Proj}
\DeclareMathOperator{\Supp}{Supp}
\title[Potential density of integral points]{On potential density of integral points on the complement of some subvarieties in the projective space}
\author{Teranishi Motoya}
\address{Department of Mathematics, Faculty of Science, Kyoto University, Kyoto 606-8502, Japan}
\email{teranishi.motoya.56s@st.kyoto-u.ac.jp}
\date{January, 17, 2024}
\begin{document}
\maketitle
\begin{abstract} 
We study some density results for integral points on the complement of a closed subvariety in the $n$-dimensional projective space over a number field. For instance, we consider a subvariety whose components consist of $n-1$ hyperplanes plus one smooth quadric hypersurface in general position, 
or four hyperplanes in general position plus a finite number of concurrent straight lines.
In these cases, under some conditions on intersection, we show that the integral points on the complements are potentially dense. 
Our results are generalizations of Corvaja--Zucconi's results for complements of subvarieties in the two or three dimensional projective space.
\end{abstract}


\section{Introduction}
One of the central problems in Diophantine Geometry is to describe the set of integral points on varieties defined over a number field $K$. Let $S$ be a set of places of $K$, containing all infinite ones. 
As in the celebrated Siegel's theorem on integral points, proving that the $S$-integral points on an affine smooth curve of genus $\geq 1$ are always finite, the abundancy for integral points has been thought to be concerned with geometric nature. 

In this paper, we are especially interested in seeking for a sufficient condition for \emph{potential density of integral points} on varieties written as complements of subvarieties in the projective space, which means that they become Zariski dense after a finite extension of the ground field and the set $S$. 

Several conjectures and results for special cases of them are provided on the problems of potential density of integral points. In this paper, we are interested in the following two conjectures. 
\begin{conj}[\cite{HT01}] \label{conj:anticanonical}
Let $(X,D)$ be a pair with $X$ a smooth projective variety and $D$ a reduced effective anticanonical divisor with at most normal crossings singularities. Are the integral points on $X\setminus{D}$ potentially dense?
\end{conj}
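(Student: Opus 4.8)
The final statement is the Harris--Tschinkel conjecture on potential density for log Calabi--Yau pairs, and it is \emph{open in full generality}; accordingly, rather than a complete proof, the plan is to describe the strategy one follows to establish it in the tractable cases, of which the configurations studied in this paper are instances. Here $X=\bP^n$ and $D$ is a reduced normal-crossings divisor of degree $n+1$, so that $K_X+D$ is trivial and $D$ is genuinely anticanonical. The guiding principle is that the log Calabi--Yau structure of $X\setminus D$ should force an abundance of \emph{integral curves}, and the goal is to exhibit a Zariski-dense family of them.

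First I would reduce potential density to producing, after enlarging $K$ and $S$, a Zariski-dense family of rational curves $C\subset X$ whose normalizations meet $D$ in at most two points. On such a $C$ the locus $C\setminus(C\cap D)$ is an open subset of $\mathbb{A}^1$ or of $\G_m=\bP^1\setminus\{0,\infty\}$, on which $S$-integral points are Zariski dense for suitable finite $S$ (by density of $S$-units, this is the one robust arithmetic input); the normal-crossings hypothesis on $D$ guarantees that $S$-integral points of $X\setminus D$ restrict to $S$-integral points of each such curve, so the union of these curves carries a dense set of integral points.

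The heart of the matter is the construction of the family. For the present configurations I would fix a generic point $p$ and search among lines, conics, and other low-degree rational curves through $p$ for those that meet the components of $D$ with enough tangency to cut the intersection with $D$ down to two points. The anticanonical numerics $\deg D=n+1$ place this at a boundary case where the expected intersection number can, with care, be forced down to two by imposing the right tangency conditions; one then checks that as $p$ and the curve vary these curves sweep out a Zariski-dense subset, and that the parameter space of admissible curves is unirational, so its rational points become dense after a finite extension.

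The \textbf{main obstacle}, and precisely the reason the conjecture remains open in general, is this construction: there is no uniform recipe for producing enough integral curves, and the intersection geometry must be analyzed case by case, both to check that the tangency conditions are solvable over the relevant function field and to ensure that the degenerate members of the family---curves falling into $D$, or meeting a deeper stratum of the normal-crossings locus---do not destroy Zariski density. Controlling these degenerations, and ruling out that all admissible curves lie in a proper subvariety, is the delicate step that the special hypotheses on the intersection configuration are designed to make manageable.
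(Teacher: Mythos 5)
The statement you were asked about is a conjecture (due to Hassett--Tschinkel), not a theorem; the paper offers no proof of it, only proofs of special cases, and you are right to decline to prove it and to sketch the strategy instead. Your sketch is essentially the paper's actual method: produce a Zariski-dense family of rational curves (in the paper, lines through a fixed point $p$ of $Q\cap L$) meeting $D$ in at most two $S$-coprime points, and use density of $S$-units on $\G_m$ to populate each such curve with integral points (this is \cref{prop:Beukers_Lemma}). One technical point is stated backwards, and it is precisely the subtlety the paper devotes a subsection to: the easy direction is that $S$-integral points of $X\setminus D$ lying on the curve $C$ are integral on $C\setminus(C\cap D)$; what the argument actually needs is the converse, that points of $C$ integral with respect to the two points $A,B$ of $C\cap D$ are integral on all of $X\setminus D$. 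This fails in general (the paper gives an example over $\Q(\sqrt{5})$ where $L\cap D$ is two coprime points yet $L(K)\cap(X\setminus D)(\cO_S)$ is empty) and requires the reduction condition $L_v\cap D_v=\{A_v,B_v\}$ for all $v\notin S$, which for divisors is extracted from $L_v\not\subset D_v$ by a multiplicity count (\cref{prop:Beukers_lem_divisor}). Your "main obstacle" paragraph correctly locates where the difficulty lies, but controlling these reductions at bad places, not just the geometric tangency conditions, is part of it.
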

\begin{conj}[{The Puncturing Conjecture, \cite{HT01}, \cite{CoZu23}}] \label{conj:puncturing}
Let $X$ be a smooth (quasi-)projective variety over a number field $K$ and $Z$ a subvariety of codimension $\geq 2$. Assume that the rational points on $X$ are potentially dense. Are the integral points on $X\setminus{Z}$ are potentially dense?    
\end{conj}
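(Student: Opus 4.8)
The plan is to bootstrap from the hypothesis---potential density of rational points on $X$---by transporting a dense set of rational points onto a family of auxiliary curves that avoid $Z$, exploiting the codimension hypothesis $\operatorname{codim}_X Z \geq 2$. After a finite extension of $K$ and an enlargement of $S$ we may assume that $X(K)$ is Zariski dense, and we fix an integral model $\mathcal X$ over $\mathcal O_{K,S}$ together with the closure $\mathcal Z$ of $Z$. The whole difficulty is then to convert the \emph{rational} points we already control into \emph{integral} points of $X\setminus Z$ without losing density.

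The first and most important step is a deformation/dimension count producing, through a general rational point $P \in X(K)\setminus Z$, an irreducible curve $C$ (ideally rational, or a translate of a one-dimensional subgroup when $X$ is an abelian variety) passing through $P$ and disjoint from $Z$. When $X$ is rationally connected---in particular $X=\mathbb P^n$, the case underlying the present paper---one takes $C$ to be a line: the lines through $P$ form an $(n-1)$-dimensional family, while those meeting $Z$ form a family of dimension at most $\dim Z \leq n-2$, so a general line through $P$ misses $Z$ entirely. The point of forcing $C\cap Z = \emptyset$ is that then every rational point of $C$ is automatically an $S$-integral point of $X\setminus Z$: the integrality condition imposed by $Z$ is vacuous along $C$.

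Next I would promote this from one curve to a family. Parametrize the admissible curves by a base $T$ (a Zariski-open subset of a space of lines, or of $X$ itself acting by translation in the abelian case) so that the total space $\mathcal C \to T$ has generic fibre disjoint from $\mathcal Z$ over a \emph{single} finite set $S$. This uniformity is exactly what prevents $S$ from growing with the point: over a dense open $T^\circ \subseteq T$ the fibres avoid $\mathcal Z$ integrally for one fixed $S$. On each such fibre $C_t \cong \mathbb P^1$ the rational points are dense and, by the previous step, integral on $X\setminus Z$; as $t$ ranges over $T^\circ(K)$ the curves $C_t$ sweep out a Zariski-dense subset of $X$, and collecting their rational points yields a Zariski-dense set of $S$-integral points of $X\setminus Z$.

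The main obstacle is that potential density of rational points does not by itself guarantee the geometric input of the first step. For rationally connected $X$ the supply of rational curves avoiding $Z$ is plentiful, and this is essentially the situation exploited in the explicit configurations of this paper; but the conjecture is asserted for \emph{arbitrary} $X$ with potentially dense rational points, and such $X$ may contain no rational curves at all---abelian varieties being the standard example. There one must replace lines by translates of positive-dimensional abelian subvarieties, and already the bookkeeping of integrality with respect to a non-divisorial $Z$, together with the demand of a single finite $S$ valid for a whole dense family, is delicate. For general $X$ of intermediate type---neither rationally connected nor governed by a group action---it is unclear how to produce the required family of $Z$-avoiding subvarieties at all. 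This is the crux that keeps the Puncturing Conjecture open, and the reason the present paper restricts to configurations inside $\mathbb P^n$ where the auxiliary curves can be written down by hand.
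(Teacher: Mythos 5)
The statement you are attempting is stated in the paper as a \emph{conjecture} (\cref{conj:puncturing}); the paper does not prove it, and it is open in general. What the paper actually establishes is the special case of \cref{MainThm3_concurline_puncturing} ($X=\mathbb{P}^3_K$, $Z$ a union of concurrent lines avoiding the pairwise intersections of a four-hyperplane boundary), and even there the proof is a completely explicit construction of $S$-integral points with $S$-unit coordinates (Steps 2--3 of \cref{sect:MainThm3}), not a fibration-by-curves argument. So your proposal can only be judged as a strategy sketch, and you yourself concede in the final paragraph that it cannot handle general $X$ (e.g.\ varieties carrying no rational curves at all).

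Beyond that honest concession, there is a concrete gap already in the rationally connected case. First, $C\cap Z=\emptyset$ over $K$ does \emph{not} make every rational point of $C$ an $S$-integral point of $X\setminus Z$ for the given $S$: the reductions $C_v$ and $Z_v$ can still meet at finitely many places $v\notin S$ (cf.\ \cref{lem:bad-reduction-intersection}), this exceptional set depends on $C$, and at a place with small residue field the bad locus $C_v\cap Z_v$ may even absorb every $k_v$-point of $C_v$, leaving no $S$-integral point on $C$ at all without enlarging $S$. Second, and more seriously, your uniformity claim --- that over a dense open $T^\circ$ of the parameter space one fixed $S$ works for all fibres --- is precisely the hard point, not a routine spreading-out. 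For fixed $S$, the set of parameters $t$ with $(C_t)_v\cap Z_v=\emptyset$ for all $v\notin S$ is exactly the set of $S$-integral points of $T\setminus T_{\mathrm{bad}}$, where $T_{\mathrm{bad}}$ is the locus of curves meeting $Z$; when $\dim Z=\dim X-2$ and the curves are lines through a fixed point, $T_{\mathrm{bad}}$ is a divisor in $T$, so you have traded the original problem for an integral-point density problem on a divisor complement, which is if anything harder. This is why the paper's density arguments always pass through an explicit integrality verification --- either Beukers' lemma (\cref{prop:Beukers_Lemma}, \cref{prop:Beukers_lem_divisor}), which requires the line to meet the boundary in two $S$-coprime points and requires $(l_q)_v\not\subset D_v$ for all $v\notin S$, or the direct $S$-unit congruence computation of \cref{sect:MainThm3} --- rather than asserting that a general member of a family is integral for one uniform $S$.
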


Much of results are known for 2 or 3-dimension. For example,  \cref{conj:anticanonical} is proved for $X$ smooth del Pezzo surfaces (see \cite{Coc23}, \cite{HT01}),  for $X$ elliptic K3 surfaces (see \cite{BT00}, \cite{LaNa22}) and for some Fano 3-fold or complements in $\bP^{3}$ \cite{CoZu23}. 
\cref{conj:puncturing} is proved for varieties in some class containing toric varieties \cite{HT01}, for some Fano 3-fold \cite{DR22}. Besides, there is a work by Levin and Yasufuku \cite{LY16} which studies potential density of an affine surface $X$ given as the complement in a curve in $\bP^{2}$ via its logarithmic kodaira dimension $\overline{\kappa}(X)$. Note that \cref{conj:anticanonical} is contained in the case $\overline{\kappa}(X) = 0$. 

Our interest is to study these two conjectures for $X=\mathbb{P}^{n}_K$, and we proved the following theorem. 
\begin{theo} \label{MainThm1_n-1planes_plus_1quadric}
 Let $n \geq 2$, and $D$ be a divisor on $\bP_K^{n}$ of the following form:
 \[D = H_1 + \dots + H_{n-1} + Q.\]
 Here,  $H_1,\dots, H_{n-1} \subset \bP_K^{n}$ are hyperplanes over a number field $K$ in general position and $Q$ is a smooth quadric hypersurface over $K$. Suppose that the line $L\coloneqq H_1\cap \dots \cap H_{n-1}$ and $Q$ have two $K$-rational intersection point.
 Then the integral points on $\bP_K^n\setminus{D}$ are potentially dense.
\end{theo}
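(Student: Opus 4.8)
The plan is to choose coordinates adapted to the configuration so that a one–parameter torus acts on $U\coloneqq \bP_K^{n}\setminus D$ preserving the boundary $D$, and then to write down enough $S$-integral points directly using $S$-units. The hypothesis of \emph{two} distinct $K$-rational points of $L\cap Q$ is exactly what produces this torus action.

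First I would normalize coordinates. Since $H_1,\dots,H_{n-1}$ are in general position with $L=H_1\cap\dots\cap H_{n-1}$ a line, I may take $H_i=\{x_{i+1}=0\}$ for $i=1,\dots,n-1$, so that $L=\{x_2=\dots=x_n=0\}$. The two $K$-rational points of $L\cap Q$ can then be placed at $P_1=[1:0:\dots:0]$ and $P_2=[0:1:0:\dots:0]$. Because the restriction of $Q$ to $L$ is a binary quadratic form in $x_0,x_1$ vanishing at $P_1,P_2$, the coefficients of $x_0^2$ and $x_1^2$ in $Q$ vanish, so
\[ Q=x_0x_1+x_0L_0(x_2,\dots,x_n)+x_1L_1(x_2,\dots,x_n)+q(x_2,\dots,x_n) \]
for linear forms $L_0,L_1$ and a quadratic form $q$. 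The substitution $x_0\mapsto x_0+L_1,\ x_1\mapsto x_1+L_0$, a linear change fixing $P_1,P_2$ and each $H_i$, brings $Q$ into the shape $Q=x_0x_1+\widetilde{q}(x_2,\dots,x_n)$.

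The key point is that in these coordinates the torus $\G_m$ acts on $\bP_K^{n}$ by $a_t\colon[x_0:\dots:x_n]\mapsto[tx_0:t^{-1}x_1:x_2:\dots:x_n]$, and this action preserves each $H_i=\{x_{i+1}=0\}$ as well as $Q=x_0x_1+\widetilde{q}$, hence preserves $D$ and acts on $U$. Geometrically this is a fibrewise action for the projection from $L$, whose fibres are the complements of a line and a conic meeting in two rational points, i.e.\ the case $n=2$; but once the torus is visible the problem can be handled in one stroke. After enlarging $S$ so that $\cO_S^{*}$ is infinite and $\widetilde{q}$ has $S$-integral coefficients, I work in the chart $x_n=1$, where $(x_0,\dots,x_{n-1},1)$ is $(D,S)$-integral as soon as $x_2,\dots,x_{n-1}\in\cO_S^{*}$ and $Q=x_0x_1+\widetilde{q}\in\cO_S^{*}$. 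For any $t,u\in\cO_S^{*}$ and any $\mathbf{y}=(y_2,\dots,y_{n-1})\in(\cO_S^{*})^{n-2}$, writing $c=\widetilde{q}(\mathbf{y},1)$, the point
\[ \bigl(t(u-c),\ t^{-1},\ y_2,\dots,y_{n-1},\ 1\bigr) \]
satisfies $Q=u\in\cO_S^{*}$ and so is integral; here the parameter $t$ is simply the $\G_m$-orbit of $(u-c,1,\mathbf{y},1)$. A short Zariski-density argument then finishes: fixing $t$ and $\mathbf{y}$ and letting $u$ range over the infinite set $\cO_S^{*}$ forces any polynomial vanishing on this family to be independent of $x_0$, and then letting $t^{-1}$ and each $y_i$ vary over $\cO_S^{*}$ eliminates the remaining variables, so these integral points are Zariski dense in $U$.

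I expect the real work to be bookkeeping rather than a genuine obstacle. The two items to verify carefully are that the chosen integral model of $U$ makes ``$H_i$ and $Q$ take unit values'' the correct notion of $(D,S)$-integrality, and that the finitely many primes introduced by the normalization and by the coefficients of $\widetilde{q}$ can be absorbed into $S$ uniformly over the whole constructed family. The essential geometric input — two rational points diagonalizing the $x_0x_1$-term and producing the $\G_m$-action on $U$ — is what renders the argument elementary, so the only subtlety lies in the model and the uniform choice of $S$; no deep Diophantine input beyond the density of $S$-units is needed.
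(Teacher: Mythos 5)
Your argument is correct, and it takes a genuinely different route from the paper's. The paper proceeds in two stages: it first projects $Q$ from one of the two rational points $p$ of $L\cap Q$ to identify $Q\setminus{(\Supp(T_pQ+\sum H_i)\cap Q)}$ with $\G_m^{n-1}$ and obtain a dense set of integral points on the quadric (\cref{intpt_on_Q}), and then, for each such point $q$, applies Beukers' lemma in the form of \cref{prop:Beukers_lem_divisor} to the line through $p$ and $q$ — which requires enlarging $S$ so that $\cO_S$ is a principal ideal domain, checking $S$-coprimality of $p$ and $q$, and controlling the reductions $(l_q)_v$ via \cref{lem:bad-reduction-intersection}. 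You instead normalize coordinates so that $H_i=\{x_{i+1}=0\}$ and $Q=x_0x_1+\widetilde q(x_2,\dots,x_n)$ — this is exactly where the hypothesis of two $K$-rational points of $L\cap Q$ enters — and then write the integral points down explicitly: in the chart $x_n=1$, integrality amounts to $x_0,x_1\in\cO_S$, $x_2,\dots,x_{n-1}\in\cO_S^{\ast}$ and $x_0x_1+\widetilde q\in\cO_S^{\ast}$, and your family $(t(u-c),t^{-1},\mathbf y,1)$ visibly satisfies all three. What this buys is a shorter and more elementary proof: no Beukers' lemma, no PID hypothesis, no coprimality bookkeeping, and no auxiliary density statement on $Q$; the only inputs are the infinitude of $\cO_S^{\ast}$ and the invariance of potential density under $K$-isomorphisms preserving $D$ (\cref{cor:not_pd_heredity}), and the method is in fact closer in spirit to the paper's proof of \cref{MainThm3_concurline_puncturing}. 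Two trivial repairs: the completing-the-square substitution should be $x_0\mapsto x_0-L_1$, $x_1\mapsto x_1-L_0$ (with plus signs the cross terms do not cancel), and your stated integrality criterion should also record the condition $x_0,x_1\in\cO_S$ needed to avoid reduction to the hyperplane at infinity $H_{n-1}=\{x_n=0\}$ — a condition your points do satisfy, since $t$, $t^{-1}$, $u$ and $c=\widetilde q(\mathbf y,1)$ all lie in $\cO_S$ after the uniform enlargement of $S$ you describe.
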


   \begin{theo} \label{MainThm3_concurline_puncturing}
    Let $D\subset \bP^{3}_K$ be a closed subvariety of the following form:
    \[
    D = H_1 + H_2 + H_3 + H_4 + L_1 + \dots + L_r
    \]
    Here, $H_1,\dots, H_4$ are hyperplanes over $K$ in general position, and $L_1,\dots, L_r$ are concurrent lines over $K$ passing through a common $K$-ratinal point $p$. Suppose also that each of $L_1,\dots, L_r$ does not intersect the 6 lines $\bigcup_{1\leq i<j\leq 4}H_i\cap H_j$. Then the integral points on $\bP^{3}_K\setminus{D}$ are potentially dense. 
    \end{theo}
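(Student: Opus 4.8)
The plan is to recognize $\bP^3_K\setminus(H_1+\dots+H_4)$ as a split algebraic torus and then treat the deletion of the lines as an instance of \cref{conj:puncturing}. As $H_1,\dots,H_4$ are in general position, over a finite extension of $K$ we may choose coordinates with $H_j=\{x_{j-1}=0\}$, so that $\bP^3_K\setminus(H_1+\dots+H_4)\cong\G_m^3$ with affine coordinates $u,v,w$; its $S$-integral points are the triples of $S$-units $(\cO_S^\times)^3$, and after enlarging $K$ and $S$ so that $\cO_S^\times$ has large rank these are Zariski dense. Since each $L_i$ has codimension $2$, the real content is that a dense subset of these $S$-unit points can be arranged to miss every $L_i$ not only over $K$ but modulo every prime $v\notin S$.

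First I would fiber the torus by $2$-dimensional subtori. Choose a primitive character $\chi=u^{a}v^{b}w^{c}\colon\G_m^3\to\G_m$ that is non-constant on each $L_i$; as the $L_i$ are finitely many lines a generic exponent vector $(a,b,c)$ works, and this transversality forces each $L_i$ to meet every fiber $F_t=\{\chi=t\}$ in a finite set. Each $F_t$ is a coset of the connected kernel, hence $F_t\cong\G_m^2$, and for $t\in\cO_S^\times$ its integral points form a dense copy of $(\cO_S^\times)^2$. The edge-avoidance hypothesis is used to ensure that the closures of the $L_i$ meet the toric boundary $\bigcup_jH_j$ only along its smooth locus, so that the boundary induced on each $F_t$ is the standard toric boundary of $\G_m^2$; deleting the finitely many punctures $L_i\cap F_t$ is then precisely the two-dimensional toric case of \cref{conj:puncturing} established in \cite{HT01}, which furnishes a dense set of fiberwise integral points avoiding the $L_i$. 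Letting $t$ range over the dense set $\cO_S^\times$ and assembling the fibers yields a Zariski-dense set of $S$-integral points of $\bP^3_K\setminus D$. The concurrency hypothesis organizes this uniformly: projection from the common point $p$ contracts each $L_i$ to a single point $q_i\in\bP^2_K$, so that ``avoid every $L_i$'' becomes the single condition that the image avoid $\{q_1,\dots,q_r\}$, while in the one fiber $F_{\chi(p)}$ through $p$ all $r$ lines collapse to the single puncture $p$.

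The step I expect to be the main obstacle is the integral, as opposed to merely rational, avoidance: one must rule out that an $S$-unit point lies off every $L_i$ over $K$ yet reduces onto some $L_i$ modulo a prime $v\notin S$. This is exactly where codimension two is decisive. On a fiber the locus of bad reductions at $v$ is cut out by two independent congruences, so it has relative density $O(|k_v|^{-2})$ in $\G_m^2(k_v)$, and $\sum_{v\notin S}|k_v|^{-2}<\infty$; a Borel--Cantelli/sieve argument inside the finitely generated group $(\cO_S^\times)^2$ then leaves a positive proportion of integral points avoiding all $L_i$ at every finite place. The delicate part is to run this sieve uniformly as $t$ varies while keeping a single extension of $K$ and $S$ fixed, and it is here that the clean position of the lines---concurrent at $p$ and disjoint from the edges $H_a\cap H_b$---makes the reduction data modulo $v$ behave uniformly enough for the estimate to hold across all fibers simultaneously.
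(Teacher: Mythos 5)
The main obstacle you correctly identify---producing $S$-unit triples that avoid the reductions $(L_i)_v$ at \emph{every} finite place $v\notin S$---is exactly where your argument has a genuine gap, and neither of the two devices you propose closes it. First, the fiberwise appeal to the toric case of \cref{conj:puncturing} from \cite{HT01} produces, for each fiber $F_t\cong\G_m^2$, \emph{some} finite extension $K'_t$ and enlargement $S'_t$ for which the punctured fiber has dense integral points; since you let $t$ range over infinitely many fibers, you need a single pair $(K',S')$ working for all of them simultaneously, and nothing in your argument supplies this uniformity (you flag it as ``delicate'' but do not resolve it). Second, the Borel--Cantelli estimate is not valid as stated: the bound $O(|k_v|^{-2})$ is the density of $(L_i)_v(k_v)$ inside $\G_m^2(k_v)$, but the points you are sieving are not equidistributed there---they are powers of finitely many fixed units, whose reductions generate a subgroup of $k_v^{\ast}$ of order possibly far smaller than $|k_v|$. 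Within that subgroup the proportion of exponent vectors landing on $(L_i)_v$ can be much larger than $|k_v|^{-2}$, so the summability of $\sum_v |k_v|^{-2}$ gives no control and the sieve need not leave any good points.

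For comparison, the paper avoids all equidistribution issues by an explicit congruence trick. After normalizing $H_i=V_+(X_{i-1})$ and writing $p=[b:d:f:1]$ and $L_i$ with direction $[a_i:c_i:e_i]$ (the edge-avoidance hypothesis guarantees that $b,d,f$ and all $a_i,c_i,e_i$ are nonzero, hence $S$-units after enlarging $S$), it takes points $x_{j,k,l}=(b\alpha^{lN_{j,k}},\beta^j,\gamma^k)$, where $N_{j,k}$ is the order of $(\cO_S/I_{j,k}^{1+g_{j,k}})^{\ast}$ for an auxiliary ideal $I_{j,k}$ built from the quantities $e_i(\beta^j-d)-c_i(\gamma^k-f)$. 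At any prime $\mathfrak{m}_v$ dividing $I_{j,k}$ one then has $b\alpha^{lN_{j,k}}\equiv b$ to sufficiently high order, and a direct valuation computation verifies the ideal identity of \cref{prop:Coprime-vs-ideals}, so $x_{j,k,l}$ misses every $(L_i)_v$ by construction rather than by counting; letting $j,l$ vary fills a hyperplane $V_+(X_2-\gamma^kX_3)$ and letting $k$ vary gives density. If you only want the statement of \cref{MainThm3_concurline_puncturing} rather than an explicit construction, note that it already follows from the three-dimensional toric case of \cite{HT01} applied to $\G_m^3\setminus\bigcup_i(L_i\cap\G_m^3)$, with no fibration needed; the paper states this explicitly.
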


    \begin{figure}[!] 
    \begin{tikzpicture}
    \draw (-2,-2)--(2,2);
    \fill (1,1) circle(2pt);
    \filldraw[ball color=black!5!white, opacity=0.8] (0,0) circle(1.9);
    \draw (-2,-2)--(-1, -1);
    \fill (-1,-1) circle(2pt) node[right]{$p$};
    \draw node at (-1.2,1.2) [above left]{$Q$};
    \filldraw[color=black, opacity=0.1] (-4,-2)--(1,-2)--(5,2)--(0,2)--cycle;
    \draw node at (2,3) [right]{$H_1$};
    \filldraw[color=black, opacity=0.1] (2,3.6)--(-2,-0.4)--(-2,-3)--(2,1)--cycle;
    \draw node at (4,2) [above]{$H_2$};
    \end{tikzpicture}
    \caption{$D=H_1+H_2+Q$ in \cref{MainThm1_n-1planes_plus_1quadric} ($n=3$)}
    \end{figure}

\begin{figure}[!] 
\begin{tikzpicture}
\draw[dashed] (-2,-2)--(2,2);%
\draw[dashed] (-3,-1)--(2,-1);%
\draw[dashed] (-1,1.55)--(-1,-2.5);
\fill (-2.5,0) circle(2pt) node[below]{$p$};
\draw (0.5,1)--(-5.5,-1) node at (-5.5,-1.4) {$L_1$};
\draw (-4.5,1)--(1.4,-1.9) node at (-4.3,1) [above]{$L_2$};
\filldraw[color=black, opacity=0.1] (2.3,4.9)--(-2,0.6)--(-2,-3.5)--(2.3,0.8)--cycle; %
\draw node at (2,2.3) [right]{$H_1$};
\filldraw[color=black, opacity=0.1] (-4,-2)--(1,-2)--(5,2)--(0,2)--cycle;
\draw node at (4,2) [above]{$H_2$};
\filldraw[color=black, opacity=0.1] (-3,-2.5)--(2,-2.5)--(2,1.55)--(-3,1.55)--cycle;
\draw node at (1.5,-3) [right]{$H_3$};
\filldraw[color=black, opacity=0.1] (-3,2)--(-2,3.5)--(4,5)--(3,3.5)--cycle;
\draw node at (-2.8,3) {$H_4$};
\end{tikzpicture}
\caption{$D=H_1+H_2+H_3+H_4+L_1+L_2$ in \cref{MainThm3_concurline_puncturing}.}
\end{figure}
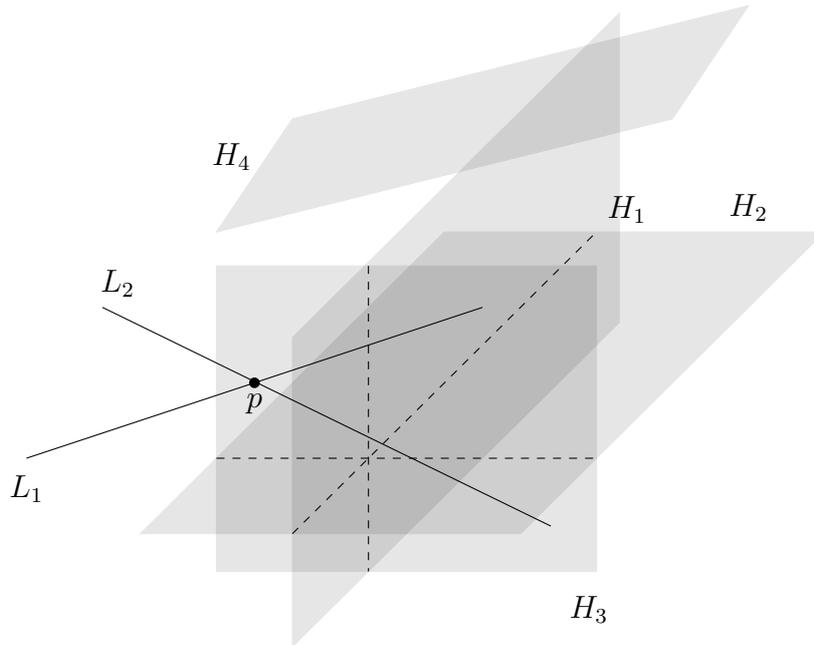 

Theorem \ref{MainThm1_n-1planes_plus_1quadric}
is a higher dimensional generalization of a result obtained by Corvaja-Zucconi \cite[Theorem 3.3.2]{CoZu23}.
The strategy of our proof of \cref{MainThm1_n-1planes_plus_1quadric} 
is similar to Corvaja--Zucconi's proof. The key step is to find sufficiently many straight lines intersecting $D$ in two coprime points and to apply a lemma of Beukers \cite{Beu95} (see \cref{prop:Beukers_Lemma}) to search for infinitely many integral points lying on a straight line.

\cref{MainThm3_concurline_puncturing} proves the puncturing conjecture for $X = \bP^{3}_K$ and $Z$ a finite number of concurrent straight lines with some condition on their intersection, but note that this result is true by the case of toric varieties in \cite{HT01}. The motivation behind this theorem is to give a similar proof of Corvaja-Zucconi's result \cite[Lemma 3.2.1]{CoZu23}, which 

we directly construct sufficiently many $S$-integral points by using appropriate $S$-units for their coordinates. The condition on the intersection of $L_1,\dots, L_r$ and $\bigcup_{1\leq i<j \leq 4}H_i \cap H_j$ is necessary for our constructions of integral points.  


The organization of this paper is as follows. In \cref{section:preliminary}, we review some basic about integral points on varieties. In \cref{section:Beukers_Lemma}, we review a lemma of Beukers on integral points on straight lines. In \cref{section:main_result}, we prove the main theorems of this paper and mention its generalization.



\section{Preliminaries} \label{section:preliminary}

This section is included to recall some basic definitions such as integral points on varieties. Our main reference is \cite{Cor16}. 
 
\subsection{Notation} 

\begin{itemize}
\item $K$ is a number field, and $\cO_K$ is the ring of integers. 
\item $M_K$ is the set of all places of $K$, $M_{K}^{{\rm fin}}$ is the set of all finite ones, and $M_K^{\infty}$ is the set of all infinite ones.
\item $S$ is a finite set of places of $K$, and we always suppose that $S$ contains the infinite ones.  
\item $\cO_S \coloneqq \set{x\in K | \, v(x)\geq 0 \, \, \text{for all $v\notin S$}}$ is the ring of $S$-integers. 
\item  $\cO_v \coloneqq \set{x\in K | v(x)\geq 0}$ is the valuation ring at a finite place $v$ with its maximal ideal $\mathfrak{m}_{v} \coloneqq \set{x\in K \mid \,\,  v(x)>0}$. Let $k_v \coloneqq \cO_v / \mathfrak{m}_v$ be the residue field of $\cO_v$. 
\item By a \emph{variety} we shall mean a separated and finite type scheme over a field $k$. The set of $k$-rational points is denoted by $X(k)$. For quasi-projective varieties, we often specify a closed immersion to the projective space. 
\item For a homogeneous ideal $I$ of the polynomial ring $k[X_0,\dots, X_n]$ over a field $k$, the zero set of $I$ in $\bP^{n}_{k}$ is denoted by $V_{+}(I)$. 
\end{itemize}

\subsection{Reduction of a subvariety}
We recall the notion of the reduction of a subvariety on the projective space $\bP_K^n$ over a number field $K$. This is necessary for our definition of integral points.

\begin{dfn}[{\cite[Section 1.1]{Cor16}}] \label{dfn:reduction}
Let $Z\subset \bP_{K}^n$ be a subvariety over $K$ defined by the radical homogeneous ideal $I$ of $K[X_0,\dots, X_n]$. Let $v\in M_{K}^{\rm fin}$. The \emph{reduction of $Z$ at $v$} is the subvariety $Z_v\subset \bP^{n}_{k_v}$ defined by the ideal obtained by the canonical image of $I_v \coloneqq I\cap \cO_v[X_0,\dots X_n]$ in $k_v[X_0, \dots ,X_n]$.
\end{dfn}

\begin{dfn} \label{dfn:reduce_to_Z}
We denote by $x_v$ the reduction of a point $x\in \bP_{K}^{n}(K)$ at $v\in M_K$. When $x_v\in Z_v$, we say that \emph{$x\in \bP_{K}^n(K)$ reduces to $Z$ at $v$.} 
\end{dfn}

\begin{ex}
Let $I = (XY - 4Z^2) \subset \mathbb{Q}[X,Y,Z]$ be an ideal. The curve $V_{+}(I)$ is an irreducible conic over $\Q$. The reduction of $V_{+}(I)\subset \bP^2_{\mathbb{Q}}$ at the 2-adic valuation $v\in M_{\Q}^{\rm fin}$ is 
\[V_{+}(I_vk_v[X,Y,Z]) = V_+((XY)\mathbb{F}_{2}[X,Y,Z]) \subset \bP^{2}_{\mathbb{F}_{2}}\]
This is a reducible curve whose components are two lines over $\mb{F}_{2}$. 
\end{ex}

\begin{rem} \label{rem:reduction_in_scheme}  
In the language of scheme theory, $Z_v$ is naturally constructed as follows. Let $\bP^{n}_{\cO_K} = \proj{\cO_K[X_0,\dots, X_n]}$ be the projective space over $\cO_K$. We have an embedding of $Z$ in the generic fiber $\bP^{n}_{K}$, and we construct $Z_v$ as the special fiber at $\mathfrak{p}_v\in \spec{\cO_K}$ (where $\mathfrak{p}_v\cO_v = \mathfrak{m}_v$) of the Zariski closure $\mathcal{Z}$ of $Z$ in $\bP^{n}_{\cO_K}$. Indeed, if $Z = V_{+}(I)$ where $I$ is a homogeneous ideal of $K[X_0,\dots, X_n]$, then $\mathcal{Z} = V_+(I\cap \cO_K[X_0,\dots, X_n])$ and $Z_{v}$ is the closed fiber of $\mathcal{Z}$ at $v$, i.e., 
\[\mathcal{Z}\times_{\cO_K}\spec{k_v} = \proj{\left(k_v[X_0,\dots, X_n]/I_v k_v[X_0,\dots, X_n]\right)}.\] 
\end{rem}
For two closed subvarieties $Z, W\subset \bP_{K}^n$, taking their (scheme theoretic) intersection does not necessarily commute with taking their reduction. In other words, there is a possibility that we have $(Z\cap W)_v \subsetneq  Z_{v} \cap W_{v}$ for some place $v\in M_K^{\text{fin}}$ because we may have
\begin{multline*} 
(\mathcal{I}(Z)+\mathcal{I}(W))\cap  \cO_v[X_0,\dots ,X_n] \\
\supsetneq 
(\mathcal{I}(Z)\cap \cO_v[X_0,\dots ,X_n]) + (\mathcal{I}(W)\cap \cO_v[X_0,\dots ,X_n])
\end{multline*}
where $\mathcal{I}(Z), \mathcal{I}(W) \subset K[X_0,\dots, X_n]$ are the homogeneous ideals of $Z, W$, respectively. However, we may see that this occurs at only a finite number of places. 



\begin{lem} \label{lem:bad-reduction-intersection}
Let $Z, W\subset \bP^{n}_K$ be a closed subset over $K$. Then the set 
\[
S = \set{v\in M_K^{\text{fin}} \mid (Z\cap W)_v \subsetneq Z_{v} \cap W_{v}} \cup M_{K}^{\infty}
\]
is finite. 
\end{lem}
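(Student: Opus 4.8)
The plan is to reduce the statement to a single finiteness property of one graded module over the polynomial ring, and to measure the failure of equality of the reductions by the torsion of that module. Write $I\coloneqq \mathcal{I}(Z)$ and $J\coloneqq \mathcal{I}(W)$ for the radical homogeneous ideals in $K[X_0,\dots,X_n]$, and set $A\coloneqq \cO_K[X_0,\dots,X_n]$, which is Noetherian by the Hilbert basis theorem. By \cref{rem:reduction_in_scheme} (using that the residue fields of $\cO_v$ and of $\cO_K$ at $\maf{p}_v$ agree, so that clearing denominators lets one replace $\cO_v$ by $\cO_K$), the reduction at a finite place $v$ of a subvariety $V_+(\maf{a})$ is cut out by $\rho_v(\maf{a}\cap A)$, where $\rho_v\colon A\to k_v[X_0,\dots,X_n]$ reduces coefficients modulo $\maf{p}_v$. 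Since $(I\cap A)+(J\cap A)\subseteq (I+J)\cap A$ holds always, applying $\rho_v$ and then $V_+$ gives the inclusion $(Z\cap W)_v\subseteq Z_v\cap W_v$ at every $v$, and the entire discrepancy is governed by the graded $A$-module
\[
M\coloneqq \big((I+J)\cap A\big)\big/\big((I\cap A)+(J\cap A)\big).
\]

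The two key properties I would establish are that $M$ is finitely generated over $A$ and that it is $\cO_K$-torsion. Finite generation is immediate: $M$ is a quotient of the ideal $(I+J)\cap A$, which is finitely generated because $A$ is Noetherian. For the torsion property, note that tensoring with $K$ commutes with the operation $\maf{a}\mapsto \maf{a}\cap A$ (a homogeneous element of $\maf{a}$ becomes integral after multiplying by a suitable nonzero element of $\cO_K$), so that $M\otimes_{\cO_K}K=(I+J)/(I+J)=0$. Combining the two facts, I would pick homogeneous $A$-generators $m_1,\dots,m_s$ of $M$, choose for each a nonzero $c_i\in\cO_K$ with $c_im_i=0$ (possible since $M$ is torsion), and put $c\coloneqq c_1\cdots c_s\neq 0$; because $\cO_K$ is central in $A$, the element $c$ annihilates all of $M$, i.e.\ $cM=0$.

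To conclude I would compare reduction with base change. The surjection $\rho_v$ with kernel $\maf{p}_vA$ produces a natural surjection $M\otimes_{\cO_K}k_v\twoheadrightarrow \rho_v\big((I+J)\cap A\big)\big/\rho_v\big((I\cap A)+(J\cap A)\big)$ (one may upgrade this to an isomorphism using the $\cO_K$-flatness of $A/((I+J)\cap A)$, since that quotient is $\cO_K$-torsion-free, but only the surjection is needed). If $c\notin\maf{p}_v$, then $c$ acts invertibly on $M\otimes_{\cO_K}k_v$ and simultaneously acts as $0$ because $cM=0$, forcing $M\otimes_{\cO_K}k_v=0$; the surjection then gives $\rho_v((I+J)\cap A)=\rho_v((I\cap A)+(J\cap A))$, hence $Z_v\cap W_v=(Z\cap W)_v$. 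Thus every place with $(Z\cap W)_v\subsetneq Z_v\cap W_v$ must satisfy $c\in\maf{p}_v$, and a fixed nonzero $c\in\cO_K$ lies in only finitely many primes; adjoining the finitely many infinite places shows the set in question is finite.

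The step I expect to be the main obstacle is precisely the passage from a degreewise statement to a uniform one. Working degree by degree, each graded piece $M_d$ is a finitely generated torsion $\cO_K$-module and so is supported on finitely many primes, but a priori the union of these bad sets over all $d$ could be infinite. What rescues the argument is that $M$ is finitely generated \emph{as an $A$-module} (via the Hilbert basis theorem), which is exactly what yields a single nonzero annihilator $c$ valid in all degrees at once; I would make sure this finite generation over $A$, rather than merely over $\cO_K$ in each degree, is the point emphasized in the write-up.
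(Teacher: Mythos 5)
Your proof is correct and follows essentially the same route as the paper: both form the module $M=\bigl((I+J)\cap \cO_K[X_0,\dots,X_n]\bigr)/\bigl((I\cap \cO_K[X_0,\dots,X_n])+(J\cap \cO_K[X_0,\dots,X_n])\bigr)$, show it is finitely generated and killed by inverting $\cO_K\setminus\{0\}$, extract a single nonzero annihilator, and conclude that equality of reductions can fail only at the finitely many primes dividing it. The only cosmetic difference is that you base-change directly to $k_v$ (noting the needed surjection), whereas the paper first tensors with $\cO_v[X_0,\dots,X_n]$; the content is identical.
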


\begin{proof}
Let $Z = V_+(I)$, $W = V_+(J)$ where $I,J\subset K[X_0,\dots, X_n]$ are homogeneous ideals. Consider the quotient $\cO_K[X_0,\dots, X_n]$-module 
\[
M\coloneqq \dfrac{(I + J)\cap \cO_K[X_0,\dots, X_n]}{(I\cap \cO_K[X_0,\dots, X_n]) + (J \cap \cO_K[X_0,\dots, X_n])}. 
\]
Let $T\coloneqq \cO_K\setminus{\set{0}}$ be a multiplicatively closed set of $\cO_K[X_0,\dots, X_n]$. Then we have 
\begin{align*}
T^{-1}M &= M\otimes_{\cO_K[X_0,\dots, X_n]} K[X_0,\dots,X_n] \\
&\cong \dfrac{T^{-1}((I+J)\cap \cO_K[X_0,\dots, X_n])}{T^{-1}(I\cap \cO_K[X_0,\dots, X_n]) + T^{-1}(J\cap \cO_K[X_0,\dots, X_n])} \\
&= (I+J)/(I+J) \\ 
&= 0.
\end{align*}
Here, the third equality follows from the fact that an ideal of $T^{-1}\cO_{K}[X_0,\dots, X_n] = K[X_0,\dots, X_n]$ is the extension of an ideal of $\cO_K[X_0,\dots, X_n]$ (see \cite[Proposition 3.11 (i)]{AM69}). 
Since $\cO_K[X_0,\dots, X_n]$ is a noetherian ring, the module $M$ is finitely generated over $\cO_K[X_0,\dots, X_n]$. Hence we may find $t\in T$ such that $tM = 0$. Now, let us define the set $S'$ as 
\[
S'\coloneqq M_{K}^{\infty} \cup \set{ v\in M_{K}^{\text{fin}} \mid v(t) > 0},
\]
and let $I_v, J_v$ and $(I+J)_v$ are ideals of $\cO_v[X_0,\dots, X_n]$ as in \cref{dfn:reduction}. Then, $t$ is a unit in $\cO_v$ for any $v\notin S'$, and hence we have 
\begin{align*}
&\quad M\otimes_{\cO_K[X_0,\dots, X_n]} \cO_{v}[X_0,\dots, X_n]\\ 
&=  \dfrac{(I + J)_v\cap \cO_K[X_0,\dots, X_n]}{(I_v\cap \cO_{K}[X_0,\dots, X_n]) + (J_v \cap \cO_{K}[X_0,\dots, X_n])}\otimes_{\cO_{K}[X_0,\dots, X_n]} \cO_v[X_0,\dots, X_n] \\ 
&= \dfrac{(I+J)_v}{I_v + J_v}\\
&= 0.
\end{align*}
Therefore, it follows that the set $S$ is contained in the finite set $S'$.
\end{proof}


\subsection{Potential density of integral points}
Let us review some basic results of integral points on varieties and the notion of its potential density. 
    \begin{dfn}
    Let $X\subset \bP_{K}^{n}$ be a quasi-projective variety, let $D\subset X$ be a proper closed subvariety over $K$, and let $\witi{X}\subset \bP^{n}_K$ be the Zariski closure of $X$.
     We say that a $K$-rational point $x\in X(K)$ is an \emph{$S$-integral point on $X\setminus{D}$} if $x$ does not reduce to $D\cup (\witi{X}\setminus{X})$ at all finite places outside $S$. We write the set of $S$-integral points on $X\setminus{D}$ by $(X\setminus{D})(\cO_S)$. 
     If $D = \set{y}$ is a single point and if $x$ is an $S$-integral point on $X\setminus{\set{y}}$, we say that \emph{$x$ and $y$ are $S$-coprime.}
    \end{dfn}
We also include the definition of integral points on quasi-projective varieties. Note that $K$-rational points are exactly $M_K^{\infty}$-integral points with $D$ empty. When $D$ is a divisor, it is called the \emph{divisor at infinity}. 

\begin{rem}
For different definitions of integral points, see \cite{CoZa18} for example. 
\end{rem}

As with rational points, we may think of the quantitative study of interal points such as its density or degeneracy. We are interested in the density of integral points after an enlargement of $K$ and $S$. 
\begin{dfn} 
Let $X\subset \bP^n_{K}$ be a projective variety over $K$, and $D\subset X$ be a proper closed subvariety over $K$. We say that \emph{the integral points on
$X\setminus{D}$ are potentially dense} if there exists a finite extension $K'$ of $K$ and a finite set $S'$ containing all the places lying over those of $S$ such that $(X\setminus{D})(\cO_{S'})$ is Zariski dense in $X(K')$.  
\end{dfn}
For integral points on an affine variety, in other words if $D$ is the ``hyperplane at infinity'' of the projective space, we may see a naive definition of integral points. 
Namely, they are the points with $S$-integer coordinates.

\begin{prop} \label{prop:affine_intpt}
Let $X\subset \bP^{n}_K$ be a variety over $K$, and
let $D$ be a divisor given by  
\[D = \set{[X_0:\dots:X_N] \in X(K) \mid X_N = 0}.\]
For a $K$-rational point 
\[x \coloneqq [x_0:\dots: x_{N-1}:1] \in (X\setminus{D})(K),\]
the following are equivalent. 
\begin{enumerate}[label=(\roman*)]
    \item The point $x$ is an $S$-integral point on $X\setminus{D}$.  
    \item $x_i \in \cO_S$ for all $i\in \set{0,1,\dots, N-1}$. 
\end{enumerate}
\end{prop}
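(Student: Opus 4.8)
The plan is to unwind the definition of $S$-integrality into a condition on the valuations of the affine coordinates $x_0,\dots,x_{N-1}$, by computing the reduction $x_v$ of the projective point explicitly and comparing it with the hyperplane at infinity $H\coloneqq V_+(X_N)$. Write $U\coloneqq \bP^N_K\setminus H$ for the standard affine chart, so that $X\setminus D = X\cap U$, and observe that the boundary locus appearing in the definition can be rewritten as $B\coloneqq D\cup(\witi{X}\setminus X)=\witi{X}\setminus(X\setminus D)$, the part of the closure lying outside this chart. By definition, $x$ is $S$-integral on $X\setminus D$ exactly when $x_v\notin B_v$ for every $v\in M_K^{\mathrm{fin}}\setminus S$, and the whole thrust of the argument is that $B$ sits inside $H$, so that this should amount to saying that the reduced point never lands on $H$.

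The arithmetic core is the following computation. For $v\in M_K^{\mathrm{fin}}$, set $c_v\coloneqq\min\{v(x_0),\dots,v(x_{N-1}),0\}\le 0$, where the last entry comes from the homogeneous coordinate $1$, and choose $\lambda_v\in K^\times$ with $v(\lambda_v)=-c_v$. Then the scaled coordinates $\lambda_v x_0,\dots,\lambda_v x_{N-1},\lambda_v$ all lie in $\cO_v$ with at least one a unit, so that
\[
x_v=[\,\ovl{\lambda_v x_0}:\cdots:\ovl{\lambda_v x_{N-1}}:\ovl{\lambda_v}\,]\in\bP^N_{k_v},
\]
with bars denoting images in $k_v=\cO_v/\mathfrak{m}_v$. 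Since the reduction of $H=V_+(X_N)$ is again $V_+(X_N)$ (its defining polynomial already has coefficients $0,1$, cf.\ \cref{dfn:reduction}), the incidence $x_v\in H_v$ holds precisely when the last coordinate $\ovl{\lambda_v}$ vanishes, i.e.\ iff $v(\lambda_v)>0$, iff $c_v<0$, iff $v(x_i)<0$ for some $i\in\{0,\dots,N-1\}$. In other words,
\[
x_v\in H_v\iff x_i\notin\cO_v\ \text{for some }i.
\]

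It remains to relate $H_v$ to the boundary $B_v$ and to assemble the two implications. For (ii)$\Rightarrow$(i): if every $x_i\in\cO_S$ then for each $v\notin S$ one has $c_v=0$, hence $x_v\notin H_v$; and since $B\subseteq H$ gives $B_v\subseteq H_v$ (monotonicity of reduction, as $X_N$ is integral), we get $x_v\notin B_v$, so $x$ is $S$-integral. For (i)$\Rightarrow$(ii) I argue contrapositively: if some $x_i\notin\cO_S$, pick $v\notin S$ with $v(x_i)<0$, so that $x_v\in H_v$; because $x\in\witi{X}(K)$, the valuative criterion of properness applied to the $\cO_K$-model of $\witi X$ (\cref{rem:reduction_in_scheme}) gives $x_v\in\witi{X}_v$, whence $x_v\in\witi{X}_v\cap H_v$. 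If this intersection is contained in $B_v$, then $x$ reduces to $B$ at $v$ and is not $S$-integral, completing the equivalence.

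The step I expect to be the main obstacle is exactly this last containment $\witi{X}_v\cap H_v\subseteq B_v$, i.e.\ matching the reduction of the boundary with the reduction of the hyperplane at infinity. Because reduction does not commute with intersection in general (\cref{lem:bad-reduction-intersection}), one cannot simply reduce the set-theoretic identity $B=\witi{X}\cap H$: the fibre $\witi X_v$ of a bad model may meet $V_+(X_N)$ in a larger locus than $B_v$, and a point going to infinity could conceivably reduce to such an extra component rather than to $B$. In the situation relevant to the applications this difficulty evaporates: when $X=\bP^N_K$ one has $\witi X_v=\bP^N_{k_v}$ and $B=D=H$, so $\witi X_v\cap H_v=V_+(X_N)=B_v$ and the matching is an equality. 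I would therefore carry out the general matching only under the hypothesis that $\witi X\setminus X$ contributes nothing away from $H$ (so that $B\subseteq H$) and that no component of $\witi X_v\cap H_v$ is lost in reduction, flagging that this is precisely the point where the geometry of the embedding, rather than the arithmetic of the coordinates, enters.
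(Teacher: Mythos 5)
Your proof is essentially the paper's: both clear denominators by the minimal valuation, read off that the last homogeneous coordinate of $x_v$ vanishes precisely when some $x_i$ has negative valuation at $v$, and translate this into membership in the reduction of the locus $X_N=0$. The one place you diverge is that you explicitly isolate the containment $\witi{X}_v\cap V_+(X_N)\subseteq B_v$ (and $B\subseteq V_+(X_N)$) as the step needing justification, whereas the paper's proof simply asserts ``hence $x$ reduces to $D$ at $v$'' and moves on. Your caution here is warranted, not pedantic: for a general $X$ the statement is actually false as written. For instance, take $X=V_+(X_0X_2-pX_1^2)\subset\bP^2_{\Q}$ with $p$ an odd prime and $D=X\cap V_+(X_2)=\set{[1:0:0]}$; the point $x=[1/p:1/p:1]=[1:1:p]$ lies on $X$, reduces at $p$ to $[1:1:0]\in X_p\cap V_+(X_2)$ but \emph{not} to $D_p=\set{[1:0:0]}$, and does not reduce to $D$ at any other place, so it is $M_{\Q}^{\infty}$-integral in the sense of the definition while its affine coordinates are not integers. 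This is exactly the failure of reduction to commute with intersection from \cref{lem:bad-reduction-intersection}, and a parallel failure occurs for (ii)$\Rightarrow$(i) when $\witi{X}\setminus X$ meets the affine chart. So your closing restriction --- carrying out the matching only when $B\subseteq V_+(X_N)$ and $\witi{X}_v\cap V_+(X_N)=B_v$, as happens in the only case the paper actually uses ($X=\bP^N$ or an affine chart thereof) --- is precisely the hypothesis under which the proposition is true, and it identifies a gap that the paper's own proof leaves unaddressed.
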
 
\begin{proof} Let $\pi_v$ be a uniformalizer of $\mathfrak{m}_v$, and let $x_N = 1$.  
Suppose (i), and suppose that $x_i \notin \cO_S$ for some $i$. Then for some places $v\notin S$, the minimum of valuations $e \coloneqq \displaystyle \min_{0\leq i\leq N}v(x_i)$ is smaller than $0$. So we have 
\begin{align*}
    x_v &= [x_0\pi_{v}^{-e}:\dots: x_{N-1}\pi_{v}^{-e}:\pi_{v}^{-e}]_v \\
    &= [x_0\pi_{v}^{-e}:\dots: x_{N-1}\pi_{v}^{-e}:0]_v,
\end{align*}

and hence $x$ reduces to $D$ at $v$, a contradiction.
Conversely, suppose (ii). Then for all places $v$ of $K$ outside $S$, we have $e = v(1) = 0$. Therefore, the reduction $x_v$ is exactly 
\[x_v = [x_0 \bmod{\maf{m}_v} : \dots : 1\bmod{\maf{m}_v}] \notin D_{v},\]
and hence $x$ is an $S$-integral point on $X\setminus{D}$. 
\end{proof}
In a similar way, we can prove the following: 
    \begin{prop}
    Let $X\subset \bP^{n}_K$ be a variety over $K$, and
    let $D$ be a divisor given by  
    \[D = \set{[X_0:\dots:X_N] \in X(K) \mid X_0X_1\dots X_{N-1}X_N = 0}.\]
    For a $K$-rational point 
    \[x \coloneqq [x_0:\dots: x_{N-1}:1] \in (X\setminus{D})(K),\]
    the following are equivalent. 
    \begin{enumerate}[label=(\roman*)]
        \item $x$ is an $S$-integral point on $X\setminus{D}$.  
        \item $x_i \in \cO_S^{\ast}$ for all $i\in \set{0,1,\dots, N-1}$. 
    \end{enumerate}
    In particular, if $\cO_S^{\ast}$ is of infinite group, the set $(\bP^{n}_K\setminus{D})(\cO_S)$ is Zariski dense in $\bP^{n}_K$. 
    \end{prop}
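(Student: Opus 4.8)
The plan is to mirror the proof of \cref{prop:affine_intpt} almost verbatim, the only change being that the single hyperplane $\{X_N=0\}$ is replaced by the full union of coordinate hyperplanes. First I would record that, since the homogeneous polynomial $X_0X_1\cdots X_N$ already has coefficients in $\cO_v$, its reduction at any $v\in M_K^{\mathrm{fin}}$ is again $X_0X_1\cdots X_N$, so $D_v$ is the union of the $N+1$ coordinate hyperplanes of $\bP^n_{k_v}$. Hence $x$ fails to reduce to $D$ at $v$ precisely when every homogeneous coordinate of the reduction $x_v$ is nonzero. Writing $e_v\coloneqq \min_{0\le i\le N} v(x_i)$ and normalizing by $\pi_v^{-e_v}$, the $i$-th coordinate of $x_v$ is the class of $x_i\pi_v^{-e_v}$, which vanishes exactly when $v(x_i)>e_v$. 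Note that $v(x_N)=v(1)=0$, so $e_v\le 0$ always.

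For the implication (ii)$\Rightarrow$(i), I would suppose each $x_i\in\cO_S^{\ast}$. Then for every $v\notin S$ we have $v(x_i)=0$ for all $i\in\{0,\dots,N-1\}$, and also $v(x_N)=0$; thus $e_v=0$ and every coordinate of $x_v$ is a unit, hence nonzero. Therefore $x_v\notin D_v$ for all $v\notin S$, i.e.\ $x$ is an $S$-integral point on $X\setminus D$. For (i)$\Rightarrow$(ii) I would argue by contraposition. If some $x_i\notin\cO_S^{\ast}$, there is a place $v\notin S$ with $v(x_i)\ne 0$. If $v(x_i)>0$, then $e_v\le v(x_N)=0<v(x_i)$, so the $i$-th coordinate of $x_v$ vanishes; if $v(x_i)<0$, then $e_v<0=v(x_N)$, so the $N$-th coordinate of $x_v$ vanishes. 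In either case $x_v\in D_v$ and $x$ is not $S$-integral on $X\setminus D$. Combining the two implications gives the equivalence.

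Finally, for the density assertion I would take $X=\bP^n_K$ (so that $\witi{X}\setminus X=\varnothing$ and the definition of $S$-integrality involves only $D$) and use the equivalence to identify $(\bP^n_K\setminus D)(\cO_S)$ with the set of points $[u_0:\cdots:u_{N-1}:1]$ whose affine coordinates $u_i$ all lie in $\cO_S^{\ast}$, that is, with $(\cO_S^{\ast})^N$ sitting inside the open torus $\G_m^N\subset\bP^n_K$. If $\cO_S^{\ast}$ is infinite, then it is an infinite subset of the affine line and hence Zariski dense in $\G_m$; a product of Zariski-dense sets is dense in the product, so $(\cO_S^{\ast})^N$ is dense in $\G_m^N$, which is in turn open dense in $\bP^n_K$.

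I do not expect any genuine obstacle: the content is entirely the valuation bookkeeping above, and the only points that deserve an explicit word are that reduction commutes with the single coordinate monomial (so that $D_v$ really is the coordinate cross, with no loss as in \cref{lem:bad-reduction-intersection}) and that a finite product of Zariski-dense subsets of the torus stays dense, together with the openness of $\G_m^N$ in $\bP^n_K$.
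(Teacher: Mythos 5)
Your proposal is correct and matches the paper's intent exactly: the paper gives no separate proof, stating only that the claim follows ``in a similar way'' to \cref{prop:affine_intpt}, and your argument is precisely that adaptation (the same $e_v=\min_i v(x_i)$ bookkeeping, applied coordinate-by-coordinate to the union of coordinate hyperplanes), plus the standard density argument via $(\cO_S^{\ast})^N\subset\G_m^N$ that the paper likewise leaves implicit. No issues.
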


The property that two $K$-rational points are $S$-coprime can be described in an algebraic way. 
\begin{prop} \label{prop:Coprime-vs-ideals}
Let $x = [x_0:\dots :x_n]$ and $y=[y_0:\dots: y_n]$ be $K$-rational points in $\bP_{K}^{n}$. Then the following is equivalent. 
\begin{enumerate}
\item $x$ and $y$ are $S$-coprime. 
\item For any $v\notin S$, the following equation of fractional ideals of $\cO_v$ holds. 
\begin{equation} \label{eq:eq-ideal} 
\sum_{i,j} (x_{i}y_j - x_jy_i)\cO_v = (x_0,x_1,\dots, x_{n})(y_0, y_1,\dots, y_n)\cO_v
\end{equation}
\end{enumerate}
\end{prop}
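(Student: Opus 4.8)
The plan is to verify the equivalence place by place, exploiting that each $\cO_v$ is a discrete valuation ring, so that every fractional ideal is a power of a fixed uniformizer $\pi_v$ of $\maf{m}_v$. Fix $v\notin S$. Since both conditions are invariant under rescaling the homogeneous coordinates (the minors $x_iy_j-x_jy_i$ and the product of the coordinate ideals transform the same way under $x\mapsto \lambda x$, $y\mapsto \mu y$, both being bihomogeneous of bidegree $(1,1)$), I would first normalize so that $e_x\coloneqq \min_i v(x_i)=0$ and $e_y\coloneqq \min_j v(y_j)=0$; that is, the chosen representative vectors lie in $\cO_v^{n+1}$ and each has a unit entry. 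Then the reductions $x_v,y_v\in \bP^n_{k_v}$ are the classes of the vectors $(\ovl{x}_0,\dots,\ovl{x}_n)$ and $(\ovl{y}_0,\dots,\ovl{y}_n)$ obtained by reducing entrywise modulo $\maf{m}_v$.

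Next I would compute both sides of \eqref{eq:eq-ideal} as powers of $\pi_v$. In a DVR a finite sum of principal fractional ideals satisfies $\sum_k a_k\cO_v = \pi_v^{\min_k v(a_k)}\cO_v$, so the left-hand side equals $\pi_v^{m}\cO_v$ with $m\coloneqq \min_{i,j}v(x_iy_j-x_jy_i)$, while the right-hand side is $(x_0,\dots,x_n)\cO_v\cdot(y_0,\dots,y_n)\cO_v=\pi_v^{e_x}\cO_v\cdot \pi_v^{e_y}\cO_v=\pi_v^{e_x+e_y}\cO_v$. Since $v(x_i)\geq e_x$ and $v(y_j)\geq e_y$, one always has $v(x_iy_j-x_jy_i)\geq e_x+e_y$, hence $m\geq e_x+e_y$, and \eqref{eq:eq-ideal} at $v$ holds precisely when $m=e_x+e_y$. (In the degenerate case $x=y$ all minors vanish, $m=\infty$, the left-hand ideal is $0$, and condition (2) fails at every $v$, consistently with $x_v=y_v$ everywhere.)

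The remaining step is to match this numerical condition with the reduction-theoretic meaning of coprimality. The points $x,y$ are $S$-coprime exactly when $x$ does not reduce to the point $y$ at any $v\notin S$, i.e. when $x_v\neq y_v$ in $\bP^n_{k_v}$ for every such $v$; here $\set{y}_v=\set{y_v}$ because the Zariski closure of a single $K$-rational point in $\bP^n_{\cO_K}$ is a section of $\bP^n_{\cO_K}\to\spec\cO_K$ by the valuative criterion of properness, with fiber $y_v$ over $v$ (cf.\ \cref{rem:reduction_in_scheme}). By the standard criterion, two points of $\bP^n_{k_v}$ coincide iff their coordinate vectors are proportional, iff all $2\times 2$ minors $\ovl{x}_i\ovl{y}_j-\ovl{x}_j\ovl{y}_i$ vanish in $k_v$. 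After the normalization $e_x=e_y=0$ these minors are the reductions of $x_iy_j-x_jy_i$, so they all vanish iff $v(x_iy_j-x_jy_i)>0=e_x+e_y$ for all $i,j$, i.e. iff $m>e_x+e_y$. Hence $x_v\neq y_v$ iff $m=e_x+e_y$, which is exactly condition \eqref{eq:eq-ideal} at $v$. Running this equivalence over all $v\notin S$ yields (1) $\Leftrightarrow$ (2).

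I expect the only delicate points to be bookkeeping rather than substance: checking that rescaling the projective coordinates leaves both sides of \eqref{eq:eq-ideal} unchanged (so the convenient normalization $e_x=e_y=0$ is legitimate), and justifying that the reduction of the reduced point $\set{y}$ is the single point $y_v$ rather than some thicker scheme. The former follows from the bidegree-$(1,1)$ homogeneity of the minors, and the latter from the section description recalled in \cref{rem:reduction_in_scheme}; with these in hand the argument is a direct valuation computation in the DVR $\cO_v$.
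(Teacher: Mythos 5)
Your proposal is correct and follows essentially the same route as the paper's proof: a place-by-place valuation computation in the DVR $\cO_v$, reducing both sides of \eqref{eq:eq-ideal} to powers of a uniformizer and observing that equality holds exactly when some $2\times 2$ minor attains the minimal possible valuation $e_x+e_y$, which in turn is equivalent to $x_v\neq y_v$. The only cosmetic difference is that you normalize the coordinates to have minimal valuation zero at the outset, whereas the paper carries the factors $\pi^{-m_1},\pi^{-m_2}$ explicitly; the substance is identical.
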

\begin{proof} 
Let $\pi$ be a uniformalizer of the maximal ideal $\maf{m}_{v}$, and let 
\[
m_1 = \min{\set{v(x_0),\dots,v(x_n)}},\quad m_2 = \min{\set{v(y_0),\dots,v(y_n)}}.
\] 
Suppose (1), then 
\[
[\ovl{x_0\pi^{-m_1}}:\ovl{x_1\pi^{-m_1}}:\dots :\ovl{x_n\pi^{-m_1}}] \neq [\ovl{y_0\pi^{-m_2}}:\ovl{y_1\pi^{-m_2}}:\dots:\ovl{y_n\pi^{-m_2}}]. 
\]
Here, $\ovl{a}$ denotes the canonical class of $a\in \cO_S$ in the residue field $k_v$. So, there exists some $i',j'$ such that 
\begin{align*}
(\ovl{x_{i'}\pi^{-m_2}})\cdot (\ovl{y_{j'}\pi^{-m_1}}) -  (\ovl{x_{j'}\pi^{-m_2}})\cdot (\ovl{y_{i'}\pi^{-m_1}}) \neq 0
\end{align*} 
in $k_v$. This is equivalent to $v(x_{i'}y_{j'} - x_{j'}y_{i'}) = m_1 + m_2$. For other $i,j$ we have $v(x_iy_j - x_jy_i) \geq m_1 + m_2$ by the definition of $m_1$ and $m_2$. Thus (2) holds. The converse is also true by the same argument.
\end{proof}
To handle integral points or their reduction in a geometric way, it is more convenient to use the definition of integral points as $\cO_S$-sections of arithmetic varieties. 

\begin{prop} 
Let $\witi{X}\subset \bP^{n}_{K}$ be a projective variety, and let $D\subset \witi{X}$ be a closed subvariety, both defined over $K$. 
Let $\mathcal{X}\subset \bP^{n}_{\cO_{S}}$ be the closure of generic fiber $\witi{X}$. Let $\mathcal{D}$ be the Zariski closure of $D$. Then we have a bijection
\[
\set{\text{$S$-integral points on $\witi{X}\setminus{D}$}} \xrightarrow{1:1} \set{\text{sections $\spec{\cO_S}\to \mathcal{X}\setminus{\mathcal{D}}$ }}.
\]
\end{prop}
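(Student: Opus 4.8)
The plan is to realize both sides as subsets of $\witi{X}(K)$ and to match them by the valuative criterion of properness, then to cut down by the condition of avoiding $\mathcal{D}$. Since $\witi{X}$ is already projective here, we have $\witi{X}\setminus\witi{X}=\emptyset$, so an $S$-integral point on $\witi{X}\setminus{D}$ is simply a point $x\in \witi{X}(K)$ with $x\notin D$ and $x_v\notin D_v$ for every $v\in M_K^{\mathrm{fin}}\setminus S$.

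First I would establish the coarse bijection $\witi{X}(K)\xrightarrow{1:1}\{\text{sections }\spec\cO_S\to\mathcal{X}\}$. The structure morphism $\mathcal{X}\to\spec\cO_S$ is proper, being a closed immersion into $\bP^n_{\cO_S}$ followed by the proper projection, and it is separated. As $\cO_S$ is a Dedekind domain, its nonzero primes are exactly the $\mathfrak{p}_v$ for $v\notin S$, with $\cO_{S,\mathfrak{p}_v}=\cO_v$ a discrete valuation ring of fraction field $K$. Given $x\in\witi{X}(K)$, viewed as a morphism $\spec K\to\mathcal{X}$, the valuative criterion yields for each such $v$ a unique extension $\spec\cO_v\to\mathcal{X}$; since $\cO_S=\bigcap_{v\notin S}\cO_v$ and $\mathcal{X}$ is of finite type, these glue to a unique section $\sigma_x\colon\spec\cO_S\to\mathcal{X}$. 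Conversely, restricting any section to the generic point $\spec K$ recovers a $K$-point, and separatedness of $\mathcal{X}$ gives uniqueness, so $x\mapsto\sigma_x$ is a bijection.

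Next I would restrict this bijection by the condition of avoiding $\mathcal{D}$. Fix $x\in\witi{X}(K)$ with section $\sigma_x$. The image of $\sigma_x$ consists of the generic point, mapping to $x$, together with the closed points $\mathfrak{p}_v$ $(v\notin S)$; by construction of $\sigma_x$, the composite $\spec k_v\to\spec\cO_S\xrightarrow{\sigma_x}\mathcal{X}$ lands in the special fiber $\mathcal{X}\times_{\cO_S}\spec k_v$ and represents precisely the reduction $x_v$ of \cref{dfn:reduction}. Therefore $\sigma_x$ factors through $\mathcal{X}\setminus\mathcal{D}$ if and only if its image is disjoint from $\mathcal{D}$, that is, if and only if $x\notin D$ at the generic point and $x_v\notin\mathcal{D}\cap(\mathcal{X}\times_{\cO_S}\spec k_v)$ at every $\mathfrak{p}_v$ with $v\notin S$. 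By \cref{rem:reduction_in_scheme} the special fiber of $\mathcal{D}$ at $v\notin S$ is exactly the reduction $D_v$; here one uses that for $v\notin S$ the localization $\cO_{S,\mathfrak{p}_v}=\cO_v$ coincides with the corresponding localization of $\cO_K$, so the closures of $D$ in $\bP^n_{\cO_S}$ and in $\bP^n_{\cO_K}$ have the same fiber over $v$. Hence the condition is exactly $x\notin D$ and $x_v\notin D_v$ for all $v\notin S$, which is the definition of an $S$-integral point on $\witi{X}\setminus{D}$, and restricting the coarse bijection to these points gives the claim.

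The hard part is the third step: one must verify that ``the section meets $\mathcal{D}$ over $\mathfrak{p}_v$'' matches ``$x_v\in D_v$'' on the nose. This rests on the identification $\sigma_x(\mathfrak{p}_v)\leftrightarrow x_v$ of the value of the section with the reduction of the point, together with the identification of $\mathcal{D}\cap(\mathcal{X}\times_{\cO_S}\spec k_v)$ with the reduction $D_v$ supplied by \cref{rem:reduction_in_scheme}. Both are bookkeeping about base change and the valuative criterion, but they are precisely where the compatibility of the two notions of ``reduction'' — of a point and of a subvariety — is used; everything else is formal from properness and separatedness over the Dedekind base $\cO_S$.
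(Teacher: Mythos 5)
Your proposal is correct and follows essentially the same route as the paper's own proof: the valuative criterion of properness over the Dedekind base $\spec{\cO_S}$, gluing the local extensions $\spec{\cO_v}\to\mathcal{X}$ into a section, and matching the value of the section over $\mathfrak{p}_v$ with the reduction $x_v$ and the fiber of $\mathcal{D}$ with $D_v$ via \cref{rem:reduction_in_scheme}. Your organization (first the coarse bijection $\witi{X}(K)\leftrightarrow\{\text{sections to }\mathcal{X}\}$, then cutting down by avoidance of $\mathcal{D}$) is only a cosmetic rearrangement of the same argument.
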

\begin{proof}
When we have a rational point $x$ of $\witi{X}$, we obtain a morphism 
\[\spec{K} \to \witi{X} = \mathcal{X}\times_{\cO_S}\spec{K} \to \mathcal{X}.\]
The structure map $\mathcal{X}\to \spec{\cO_S}$ is proper because it factors as $\mathcal{X} \to \bP^{n}_{\cO_S} \to  \spec{\cO_{S}}$. Hence, we obtain a unique morphism $\spec{\cO_v} \to \witi{X}$ commuting the following diagram by the valuative criterion of properness (\cite[Corollary II.4.8]{Har}).  
\[
\xymatrix{
\spec{K} \ar[d]\ar[r]^{x} & \mathcal{X} \ar[d] \\ 
\spec{\cO_v} \ar[r] \ar@.[ur] & \spec{\cO_S} 
}
\]
The morphism $\spec{\cO_v} \to \mathcal{X}$ is extended to some open set of $\spec{\cO_S}$ (\cite[Exercise 3.2.4]{Liu02}). So, we obtain a morphism $\spec{\cO_S} \to \mathcal{X}$ by gluing extensions of $\spec{\cO_v}\to \mathcal{X}$ for all $v\notin S$ (They can be glued by uniqueness). 

For now, let $x \in (\witi{X}\setminus{D})(\cO_S)$. Denote also $x$ as the morphism $\spec{\cO_S}\to \mathcal{X}$ we have obtained. The reduction $x_v$ at $v\notin S$ is the specialization 
\[\spec{k_v} \to \spec{\cO_S}\to \mathcal{X}_{v}.\]
By assumption, the image point of $x_v:\spec{\cO_S} \to \mathcal{X}_v$ is not contained in $\mathcal{D}_v$ (see \cref{rem:reduction_in_scheme}).
Therefore, $x$ induces a section $\Phi(x): \spec{\cO_S} \to \mathcal{X} \setminus{\mathcal{D}}$. 

Conversely, given a section $s:\spec{\cO_S}\to \mathcal{X}\setminus{\mathcal{D}}$,  we naturally obtain a rational point $\Psi(s): \spec{K} \to X$ by considering following composition:
\begin{multline*}
\Psi(s): \spec{K} \xrightarrow{\text{can.}}\spec{\cO_S}\times_{\cO_S}\spec{K} \\
\longrightarrow (\mathcal{X}\setminus{\mathcal{D})}\times_{\cO_S}\spec{K} \xrightarrow{\text{imm.}} \mathcal{X}\times_{\cO_S}\spec{K} \xrightarrow{\cong} X.
\end{multline*}
It is easy to show that $\Psi(\Phi(x)) = x$ and $\Phi(\Psi(s)) = s$.  
\end{proof}

We conclude this section with referring to facts about potential density of integral points. The following gives a bound on the number of components to have a potentially dense set of integral points.
    \begin{prop}[{\cite[Theorem 1.2]{NoWin02}, \cite[Section 5.4]{Cor16}}] \label{NoWin02degeneracy} 
    Let $\witi{X}$ be a smooth projective variety over $K$. Let $q(\witi{X})$ be its irregularity and $\rho(\witi{X})$ be the rank of its N\'{e}ron-Severi group. Let $D_1,\dots, D_{l}$ be hypersurfaces of $\witi{X}$ in general position. If $l > \dim{\witi{X}} + \rho(\witi{X}) - q(\witi{X})$, 
    then the set $(\witi{X}\setminus{(D_1\cup \dots \cup D_l)})(\cO_S)$ is not Zariski dense for any ring of $S$-integers $\cO_S$. 
    \end{prop}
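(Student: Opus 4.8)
The plan is to deduce non-density from the theory of integral points on semi-abelian varieties, following Noguchi--Winkelmann. Write $U \coloneqq \witi{X} \setminus (D_1 \cup \dots \cup D_l)$ and let $A \coloneqq \mathrm{Alb}(U)$ be its quasi-Albanese (generalized, semi-abelian Albanese) variety, fitting into an extension $0 \to T \to A \to \mathrm{Alb}(\witi{X}) \to 0$ of the Albanese of $\witi{X}$ by a torus $T$. The first step is a dimension count. The logarithmic residue sequence
\[
0 \to \Omega^1_{\witi{X}} \to \Omega^1_{\witi{X}}(\log D) \to \bigoplus_{i=1}^l \mathcal{O}_{D_i} \to 0
\]
gives on cohomology $\dim A = h^0(\witi{X}, \Omega^1_{\witi{X}}(\log D)) = q(\witi{X}) + \dim \ker(c)$, where $c \colon \mathbb{C}^l \to H^1(\witi{X}, \Omega^1_{\witi{X}})$ is the connecting map sending the $i$-th generator to the class $c_1(\mathcal{O}_{\witi{X}}(D_i))$. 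Since $c$ factors through $\mathrm{NS}(\witi{X}) \otimes \mathbb{C}$, its rank is at most $\rho(\witi{X})$, so $\dim A \geq q(\witi{X}) + l - \rho(\witi{X})$. The hypothesis $l > \dim \witi{X} + \rho(\witi{X}) - q(\witi{X})$ is exactly what forces $\dim A > \dim \witi{X} = \dim U$.

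Next I would examine the quasi-Albanese morphism $\alpha \colon U \to A$ and set $Y \coloneqq \overline{\alpha(U)}$. Because $\dim Y \le \dim U < \dim A$, the image $Y$ is a proper closed subvariety of $A$. The key structural point is that, by the universal (minimality) property of the quasi-Albanese variety, $\alpha(U)$ generates $A$ as an algebraic group; hence $Y$ is not contained in any proper translate $b + B$ of a sub-semi-abelian variety $B \subsetneq A$. Together with $\dim Y < \dim A$, this shows that $Y$ is not itself a translate of any sub-semi-abelian variety of $A$.

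Finally I would invoke Vojta's theorem on integral points on subvarieties of semi-abelian varieties: the $S'$-integral points of $Y$ (for a finite $S' \supseteq S$) are contained in a finite union of translates of sub-semi-abelian varieties all contained in $Y$, and by the previous paragraph each such translate is proper, so those points are not Zariski dense in $Y$. To conclude, suppose for contradiction that $(\witi{X} \setminus (D_1 \cup \dots \cup D_l))(\mathcal{O}_S)$ were Zariski dense for some $S$. After spreading out $\alpha$ over a suitable $\mathcal{O}_{S'}$ (enlarging $S$ to $S'$ and the base field if necessary), $\alpha$ carries $S$-integral points of $U$ to $S'$-integral points of $Y \subset A$ relative to the boundary of the canonical compactification, and their images are Zariski dense in $Y$, contradicting the preceding non-density. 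Hence the integral points are not Zariski dense for any $\mathcal{O}_S$.

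I expect two main obstacles. First, I must justify that $\alpha$ transports integral points to integral points: this needs the boundary divisor of the compactification of $A$ to pull back into $D$ (which is built into the logarithmic construction of the quasi-Albanese) together with a spreading-out argument realizing $\alpha$ as a morphism of $\mathcal{O}_{S'}$-models. Second, and most seriously, the argument rests entirely on the deep arithmetic input of Vojta's semi-abelian theorem, which I would cite as a black box; checking that the general-position hypothesis on the $D_i$ yields the residue sequence (normal crossings) and the stated cohomology computation is comparatively routine but should still be verified.
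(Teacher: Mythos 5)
The paper does not actually prove this proposition; it is quoted as an external result from Noguchi--Winkelmann and from Corvaja's lecture notes, so there is no in-paper argument to compare yours against. Your sketch reproduces the standard proof from those sources --- the logarithmic irregularity count $\overline{q}(U)\geq q(\witi{X})+l-\rho(\witi{X})>\dim\witi{X}$ via the residue sequence, the quasi-Albanese map $\alpha\colon U\to A$ whose image is a proper subvariety generating $A$, and Vojta's theorem on integral points on subvarieties of semiabelian varieties --- and the outline is correct, with the two caveats you yourself flag (that ``general position'' must be upgraded to a normal crossings situation for the residue sequence, and that $\alpha$ must be spread out over $\cO_{S'}$ so that integral points map to integral points) being precisely the places where care is needed.
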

\begin{ex}
Let $\witi{X} = \bP^{n}_K$ and $D$ a divisor with $l$ irreducible component in general position. If $l > n+1$, then the integral points on $\bP^{n}_{K}\setminus{D}$ is \emph{not} potentially dense.  
\end{ex}

Our \cref{MainThm3_concurline_puncturing} is a partial generalization of the following. 
\begin{prop}[{\cite[Lemma 3.1.1]{CoZu23}}] \label{threelines_points}
Let $Y\subset \bP^2_{K}$ be the closed subvariety formed by the union of three lines in general position and a finite set of points outside the three lines. Then the integral points on $\bP^2_{K}\setminus{Y}$ are potentially dense.
\end{prop}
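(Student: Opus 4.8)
The plan is to reduce to the split torus and treat the finitely many punctured points as a coprimality (puncturing) condition on pairs of $S$-units. First I would apply a projective linear change of coordinates (enlarging $K$ if necessary so that the three lines and their pairwise intersection points are rational) to arrange
\[
Y \cap \{\text{the three lines}\} = \{X_0X_1X_2 = 0\},
\]
so that $\bP^2_K \setminus (H_1\cup H_2\cup H_3) = \G_m^2$. By the proposition characterizing $S$-integral points on the complement of the coordinate hyperplanes (the one with the condition $x_i\in\cO_S^{\ast}$), a point $[a:b:1]$ is $S$-integral on this complement exactly when $a,b\in\cO_S^{\ast}$. Writing the remaining points as $P_i = [p_i:q_i:1]$ for $1\le i\le r$, which by hypothesis lie off the three lines so that $p_iq_i\neq 0$, the point $[a:b:1]$ is $S$-integral on $\bP^2_K\setminus Y$ iff it is moreover $S$-coprime to every $P_i$. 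By \cref{prop:Coprime-vs-ideals}, and using the identity $aq_i-bp_i = q_i(a-p_i)-p_i(b-q_i)$ among the three $2\times 2$ minors, this coprimality is equivalent to
\[
(a-p_i,\; b-q_i) = \cO_v \qquad \text{for all } v\notin S \text{ and all } i,
\]
i.e.\ $a-p_i$ and $b-q_i$ share no prime outside $S$.

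Next I would fix, once and for all, a unit $\varepsilon\in\cO_K^{\ast}$ of infinite order and enlarge $K$ and $S$ so that: $\cO_S^{\ast}$ is infinite; every $p_i,q_i$ and every difference $p_i-p_j,\,q_i-q_j$ is an $S$-unit; every residue field $k_v$ with $v\notin S$ satisfies $|k_v| > r+1$ (only finitely many primes violate this, so they are absorbed into $S$); and the order of $\overline{\varepsilon}$ in $k_v^{\ast}$ exceeds $r$ for every $v\notin S$. The last condition is arrangeable because the primes $v$ with $\operatorname{ord}_v(\varepsilon)\le r$ are precisely those dividing the fixed nonzero element $\prod_{n\le \operatorname{lcm}(1,\dots,r)}(\varepsilon^n-1)$, a finite set. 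These choices guarantee that at every $v\notin S$ the cyclic subgroup $\langle\overline{\varepsilon}\rangle\subseteq k_v^{\ast}$ has more than $r$ elements, hence meets the complement of the $r$ forbidden residues $\overline{p_1},\dots,\overline{p_r}$.

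The construction of a Zariski-dense family then proceeds by fixing one coordinate at a time. For an admissible $t\in\cO_S^{\ast}$ with $t\neq q_i$ for all $i$, consider the horizontal line $\{X_1 = tX_2\}$ and the points $[a:t:1]$ with $a\in\cO_S^{\ast}$. The coprimality condition reduces to: for each $i$ and each prime $v\notin S$ dividing $t-q_i$ (a finite set), one needs $a\not\equiv p_i \pmod{\mathfrak{m}_v}$. Taking $a=\varepsilon^k$, for each relevant $v$ the bad exponents $k$ form at most $r$ residue classes modulo $\operatorname{ord}_v(\varepsilon)$, so one excludes only finitely many congruence conditions on $k$ and retains infinitely many good $a$ on the line. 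Letting $t$ range over its infinitely many admissible values produces infinitely many distinct lines, each carrying infinitely many $S$-integral points of $\bP^2_K\setminus Y$; since a proper closed subvariety of $\bP^2$ has dimension $\le 1$ and hence contains only finitely many lines, the Zariski closure of the constructed set must be all of $\bP^2_K$, giving potential density.

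The hard part is the last step, namely guaranteeing \emph{uniformly in $t$} that the residue conditions on $a$ have an infinite solution set inside $\cO_S^{\ast}$: the number of primes dividing $t-q_i$ grows with $t$, so one must control the distribution of the powers $\overline{\varepsilon}^{\,k}$ in the product $\prod_{v\mid t-q_i}k_v^{\ast}$ for infinitely many $t$. This is exactly the puncturing phenomenon. The naive alternative of forcing $a-p_i$ (or $b-q_i$) to be an $S$-unit for all $i$ collapses to finitely many solutions by the $S$-unit equation theorem, so one cannot dodge the punctured points by units alone and must argue prime-by-prime; I would secure the needed equidistribution through a Chebotarev/Dirichlet input, arranging (after a further enlargement of $S$, or by enlarging the rank of $\cO_S^{\ast}$ and invoking surjectivity of the reduction map onto the relevant residue fields) that infinitely many admissible $t$ admit the required $a$. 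I would also record that \cref{threelines_points} is subsumed by the potential density of integral points on the complement of a codimension-$\ge 2$ subset of a toric variety \cite{HT01}, which gives an independent proof; the direct $S$-unit construction above is retained because it is the template for the analogous argument in \cref{MainThm3_concurline_puncturing}.
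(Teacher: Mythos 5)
Your reduction is set up correctly and is the standard one: after moving the three lines to the coordinate axes, an integral point is $[a:b:1]$ with $a,b\in\cO_S^{\ast}$, and by \cref{prop:Coprime-vs-ideals} avoiding the reduction of $P_i=[p_i:q_i:1]$ at $v\notin S$ amounts to $(a-p_i,\,b-q_i)\cO_v=\cO_v$. (The paper itself gives no proof of \cref{threelines_points} --- it is quoted from Corvaja--Zucconi --- so the relevant in-house comparison is the proof of \cref{MainThm3_concurline_puncturing}, which the author says is modelled on that argument.) The genuine gap is exactly where you flag it. Having fixed $b=t$ and set $a=\varepsilon^{k}$, you exclude, for each prime $v$ dividing some $t-q_i$, up to $r$ residue classes of $k$ modulo $\operatorname{ord}_v(\overline{\varepsilon})$, and assert that one ``retains infinitely many good $a$.'' That assertion is unjustified: finitely many excluded congruence classes with respect to non-coprime moduli can cover all of $\mathbb{Z}$ (two classes mod $2$ already do), and since the number of primes dividing $\prod_i(t-q_i)$ is unbounded in $t$, the density count $\sum_v r/\operatorname{ord}_v(\overline{\varepsilon})$ does not stay below $1$. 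The subsequent appeal to Chebotarev/Dirichlet equidistribution is not carried out and it is not clear it can be: one would have to control $\overline{\varepsilon}^{\,k}$ in $\prod_{v\mid t-q_i}k_v^{\ast}$ uniformly over infinitely many $t$, which is precisely the unresolved difficulty. So the proof is incomplete at its decisive step.

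The missing idea is the exponent trick of Step 2--3 in the proof of \cref{MainThm3_concurline_puncturing}: instead of letting $k$ range freely and excluding bad classes, force the unit to be $\equiv 1$ at \emph{all} bad primes simultaneously. Concretely, fix once and for all a unit $c\in\cO_S^{\ast}$ with $c\neq p_i$ for every $i$, and enlarge $S$ so that each $c-p_i$ is an $S$-unit. For an admissible $t=\beta^{j}$ put $I_j\coloneqq\prod_i(\beta^{j}-q_i)\cO_S$ (nonzero for all but finitely many $j$) and $N_j\coloneqq\#\bigl((\cO_S/I_j)^{\ast}\bigr)$, and take $a=c\,\alpha^{lN_j}$. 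For $v\nmid I_j$ the element $b-q_i$ is already a $v$-unit; for $v\mid I_j$ one has $\alpha^{lN_j}\equiv 1$, hence $a\equiv c\not\equiv p_i\pmod{\maf{m}_v}$, so $(a-p_i,\,b-q_i)\cO_v=\cO_v$ with no counting of residue classes at all. This yields infinitely many integral points on each of infinitely many lines $X_1=\beta^{j}X_2$, and the Zariski-density conclusion you draw from such a configuration is fine. Your fallback reference to the toric case of \cite{HT01} is a legitimate alternative justification of the statement, but it is a citation rather than a proof.
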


\subsection{Behaviour of integral points under morphisms} \label{Behaviour_under_mor}
The goal of this section is to illustrate that having a potentially dense set of integral points is an isomorphic invariant of quasi-projective varieties.

It is possible for two isomorphic quasi-projective varieties over $K$ to have different numbers of integer points.

\begin{ex} 
Consider the two plane conics
\begin{align*}
C_1 &\coloneqq \set{[X_0:X_1:X_2] \mid X_0X_1=X_{2}^2},\\
C_2 &\coloneqq \set{[X_0:X_1:X_2]\mid X_0X_1 = 2X_{2}^2},
\end{align*} 
and let $D$ be the line $V_+(X_2)$ in $\bP^{2}_K$. Although the two curves $C_1$ and $C_2$ are isomorphic over $\mathbb{Q}$ and there is an isomorphism $[X_0:X_1:X_2]\mapsto [2X_0:X_1:X_2]$ fixing $D$, the number of integral points ($\mathbb{Z}$-points) of them are different. Indeed, we have 
\begin{align*}
(C_1\setminus{D})(\mathbb{Z}) &= \set{[1:1:1], [-1:-1:1]} \\ 
(C_2\setminus{D})(\mathbb{Z}) &= \set{[1:2:1], [2:1:1], [-1:-2:1], [-2:-1:1]}.
\end{align*}
However, if the set $S\subset M_{\Q}$ contains the 2-adic valuation, we obtain a bijection of $S$-integral points.  
\end{ex}

As can be seen from the example above, when we study integral points by sending to other quasi-projective varieties with a morphism over $K$, we have to care about the coefficients of its local representation, because it is not necessarily extended to a morphism over $\cO_K$. However, we may see that the morphism is extended to a morphism over $\cO_S$ for some finite set $S\subset M_{K}$ containing all infinite places, and hence $S$-integral points on the domain should be sent to those on the target. Namely, 
    \begin{prop}[{\cite[Section 1.3]{Cor16}}] \label{prop:sends_to_intpt} 
    Let $\witi{X}_1 \subset \bP_{K}^{n}$ and $\witi{X}_2 \subset \bP_{K}^{m}$ be quasi-projective varieties, and let $D_1 \subset \witi{X}_1$ and $D_2\subset \witi{X}_2$ be divisors, all defined over a number field $K$. Let $\pi:\witi{X}_1\to \witi{X}_2$ be a $K$-morphism such that $\pi^{-1}(D_2) = D_1$. Enlarging $S$ if necessary, $\pi$ sends $S$-integral points on $\witi{X}_1\setminus{D_1}$ to $S$-integral points on $\witi{X}_2\setminus{D_2}$, i.e., we have
    \[\pi((\witi{X}_1\setminus{D_1})(\cO_S)) \subset (\witi{X}_2\setminus{D_2})(\cO_S).\]
    \end{prop}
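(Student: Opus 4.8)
The plan is to prove \cref{prop:sends_to_intpt} by reducing it to the valuative/scheme-theoretic description of integral points and showing that, after enlarging $S$, the morphism $\pi$ extends to a morphism of arithmetic schemes compatible with the chosen models. First I would fix closed immersions $\witi{X}_1\hookrightarrow \bP^n_K$ and $\witi{X}_2\hookrightarrow \bP^m_K$ and take the natural $\cO_S$-models: let $\mathcal{X}_1\subset \bP^n_{\cO_S}$ and $\mathcal{X}_2\subset \bP^m_{\cO_S}$ be the Zariski closures of the generic fibers, and let $\mathcal{D}_1,\mathcal{D}_2$ be the Zariski closures of $D_1,D_2$. By the section-theoretic characterization of integral points proved earlier in the excerpt, an $S$-integral point on $\witi{X}_i\setminus D_i$ is the same datum as a section $\spec{\cO_S}\to \mathcal{X}_i\setminus\mathcal{D}_i$, so it suffices to produce a morphism $\pi_{\cO_S}\colon \mathcal{X}_1\to \mathcal{X}_2$ over $\spec\cO_S$ whose restriction to the generic fiber is $\pi$ and which carries $\mathcal{X}_1\setminus\mathcal{D}_1$ into $\mathcal{X}_2\setminus\mathcal{D}_2$; composing a section $\spec\cO_S\to\mathcal{X}_1\setminus\mathcal{D}_1$ with $\pi_{\cO_S}$ then gives the desired section into $\mathcal{X}_2\setminus\mathcal{D}_2$.

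Next I would carry out the extension of $\pi$ over the integers. The morphism $\pi$ is given on $\witi{X}_1$ by a tuple of homogeneous polynomials $[F_0:\dots:F_m]$ with coefficients in $K$; clearing denominators, I may assume the $F_j\in \cO_K[X_0,\dots,X_n]$. This defines a rational map $\mathcal{X}_1\dashrightarrow \mathcal{X}_2$ over $\cO_S$, a priori only on the locus where the $F_j$ do not simultaneously vanish. Since $\pi$ is a well-defined $K$-morphism on all of $\witi{X}_1$, the base ideal of this linear system has empty intersection with the generic fiber, and hence its reduction mod $v$ can meet $\mathcal{X}_1$ only for finitely many $v$; enlarging $S$ to include these places (and the finitely many places where the coefficients fail to be $S$-units or where the models $\mathcal{X}_i$, $\mathcal{D}_i$ behave badly, cf. \cref{lem:bad-reduction-intersection}), the tuple $[F_0:\dots:F_m]$ is base-point-free on $\mathcal{X}_1$ and defines an honest morphism $\pi_{\cO_S}\colon \mathcal{X}_1\to \bP^m_{\cO_S}$ landing in $\mathcal{X}_2$.

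Finally I would verify the compatibility with the divisors at infinity. The hypothesis $\pi^{-1}(D_2)=D_1$ holds on the generic fiber; taking Zariski closures in the $\cO_S$-models gives $\pi_{\cO_S}^{-1}(\mathcal{D}_2)\supseteq \mathcal{D}_1$, and I would argue that after a further finite enlargement of $S$ the two closed subschemes $\pi_{\cO_S}^{-1}(\mathcal{D}_2)$ and $\mathcal{D}_1$ agree away from the bad fibers, again because the discrepancy between closure-of-preimage and preimage-of-closure is detected by a finitely generated torsion module over $\cO_S[X_0,\dots,X_n]$, exactly as in the proof of \cref{lem:bad-reduction-intersection}. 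Granting this, a section $s\colon\spec\cO_S\to \mathcal{X}_1$ that avoids $\mathcal{D}_1$ fiber-by-fiber is carried by $\pi_{\cO_S}$ to a section avoiding $\mathcal{D}_2$, which is precisely the inclusion $\pi((\witi{X}_1\setminus D_1)(\cO_S))\subset(\witi{X}_2\setminus D_2)(\cO_S)$.

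I expect the main obstacle to be the bookkeeping of exactly which finite set of places must be added to $S$: one must simultaneously control (i) the base locus of $[F_0:\dots:F_m]$ so that $\pi$ extends, (ii) the places of bad reduction of the models $\mathcal{X}_i$ and $\mathcal{D}_i$, and (iii) the failure of $\pi_{\cO_S}^{-1}(\mathcal{D}_2)=\mathcal{D}_1$ on special fibers. Each of these is a closed condition that is empty on the generic fiber, so each excludes only finitely many $v$; the real work is packaging these three finiteness statements cleanly, for which \cref{lem:bad-reduction-intersection} and the noetherian/finite-generation argument behind it are the right tools.
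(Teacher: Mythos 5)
Your proposal is correct and follows essentially the same strategy as the paper: enlarge $S$ so that $\pi$ spreads out to a morphism of $\cO_S$-models carrying $\mathcal{X}_1\setminus\mathcal{D}_1$ into $\mathcal{X}_2\setminus\mathcal{D}_2$, then push forward $\cO_S$-sections. The only difference is packaging --- the paper works chart-by-chart on the complements with affine coordinates and \cref{prop:affine_intpt}, while you work globally in homogeneous coordinates and therefore add the (correct, and in fact more careful) step comparing $\pi_{\cO_S}^{-1}(\mathcal{D}_2)$ with $\mathcal{D}_1$ on special fibers via the finiteness argument of \cref{lem:bad-reduction-intersection}.
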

\begin{proof}
Consider an affine open subset $U\subset X_1\setminus{D_1}$ and $V\subset X_2\setminus{D_2}$ such that $\pi(U)\subset V$. Each $U, V$ are isomorphic to a quasi-projective variety embedded in $\mb{A}^n$ and $\mb{A}^{m}$, respectively. Then the restriction $\pi|_{U}:U\to V$ is given by 
\[
\pi(x_1,\dots, x_n) = (f_1(x_1,\dots, x_n), \dots, f_k(x_1,\dots, x_n))
\]
for some polynomials $f_1,\dots, f_k\in K[X_1,\dots, X_n]$. By enlarging $S$, we can say that $f_1,\dots, f_k\in \cO_S[X_1,\dots, X_n]$. Note that for $(x_1,\dots, x_n)\in U(\cO_S)$ we have $x_i \in \cO_S$ (\cref{prop:affine_intpt}), and so we have $f_i(x_1,\dots, x_n)\in \cO_S$ for all $i$. 

Since $\pi$ is a glueing of finite number of restriction of the form $U\to V$, we obtain an $\cO_S$-morphism $\witi{\mathcal{X}}_1\setminus{\mathcal{D}_1}\to \witi{\mathcal{X}}_2\setminus{\mathcal{D}_2}$ between $\cO_S$-models, where $\witi{\mathcal{X}_1}$ and $\witi{\mathcal{X}_2}$ are the closure of $\witi{X}_1$ and $\witi{X}_2$ in $\bP^{n}_{\cO_S}$. Therefore, $\pi$ sends an $\cO_S$-section $s:\spec{\cO_S}\to \witi{\mathcal{X}}_1\setminus{\mathcal{D}_1}$ to $\pi\circ s$, an $\cO_S$-section $\spec{\cO_S}\to \mathcal{X}_2\setminus{\mathcal{D}_2}$. 
\end{proof}

\begin{cor} \label{cor:not_pd_heredity}
In the setting of \cref{prop:sends_to_intpt}, suppose also that $\pi:\witi{X}_1\to \witi{X}_2$ is dominant. If the integral points on $\witi{X_1}\setminus{D_1}$ are potentially dense, then those on $\witi{X}_2 \setminus{D_2}$ are also potentially dense. In particular, if $\pi$ is an isomorphism over $K$, the converse is also true. 
\end{cor}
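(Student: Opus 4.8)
The plan is to reduce the assertion to two facts already available: \cref{prop:sends_to_intpt}, which after a finite enlargement of the place set carries integral points to integral points along $\pi$, and the elementary observation that a dominant morphism of varieties sends a Zariski-dense subset to a Zariski-dense subset. Throughout, write $\witi{X}_{i,K'}$ and $D_{i,K'}$ for base changes along a field extension $K'/K$.

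First I would unwind potential density on the source. By assumption there is a finite extension $K'/K$ and a finite set $S'$ of places of $K'$ lying over $S$ such that $(\witi{X}_1\setminus D_1)(\cO_{S'})$ is Zariski dense in $\witi{X}_{1,K'}$. Base changing the entire datum along $K\to K'$, which is faithfully flat, preserves both dominance of $\pi$ and the equality $\pi^{-1}(D_2)=D_1$, so that $\pi_{K'}\colon \witi{X}_{1,K'}\to \witi{X}_{2,K'}$ is a dominant $K'$-morphism with $\pi_{K'}^{-1}(D_{2,K'})=D_{1,K'}$. Applying \cref{prop:sends_to_intpt} over $K'$ and enlarging $S'$ by the finite set it produces, I obtain the inclusion
\[
\pi_{K'}\bigl((\witi{X}_{1,K'}\setminus D_{1,K'})(\cO_{S'})\bigr)\subset (\witi{X}_{2,K'}\setminus D_{2,K'})(\cO_{S'}).
\]

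Now I would push forward density. Setting $Z\coloneqq (\witi{X}_{1,K'}\setminus D_{1,K'})(\cO_{S'})$, which is dense in $\witi{X}_{1,K'}$, the preimage $\pi_{K'}^{-1}(\overline{\pi_{K'}(Z)})$ is a closed set containing $Z$, hence equals $\witi{X}_{1,K'}$; therefore $\overline{\pi_{K'}(Z)}\supseteq \overline{\pi_{K'}(\witi{X}_{1,K'})}=\witi{X}_{2,K'}$ by dominance, so $\pi_{K'}(Z)$ is Zariski dense in $\witi{X}_{2,K'}$. Since the displayed inclusion places this dense image inside $(\witi{X}_{2,K'}\setminus D_{2,K'})(\cO_{S'})$, the latter is Zariski dense, which is precisely potential density of the integral points on $\witi{X}_2\setminus D_2$. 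For the final clause, when $\pi$ is an isomorphism over $K$ the inverse $\pi^{-1}$ is again a dominant $K$-morphism, and applying $\pi$ to $\pi^{-1}(D_2)=D_1$ gives $(\pi^{-1})^{-1}(D_1)=\pi(D_1)=D_2$; thus $\pi^{-1}$ satisfies the hypotheses of the first part and the reverse implication follows by symmetry.

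I expect no genuine obstacle, as the statement is formal once \cref{prop:sends_to_intpt} is granted; the only points demanding care are bookkeeping ones, namely verifying that dominance and $\pi^{-1}(D_2)=D_1$ survive the base change to $K'$, and correctly merging the two independent finite enlargements of the place set—the one furnished by potential density on the source and the one furnished by \cref{prop:sends_to_intpt}.
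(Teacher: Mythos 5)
Your argument is correct and follows the same route as the paper's proof, which simply combines \cref{prop:sends_to_intpt} with the fact that a continuous (here dominant) map carries a dense subset to a subset dense in the closure of the image. Your version merely spells out the base-change bookkeeping and the isomorphism clause that the paper leaves implicit.
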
 
\begin{proof}
For any continuous map $f:X\to Y$ between topological spaces, the image of a dense subset in $X$ is also dense in the image $f(X)$. Hence the claim holds by dominancy of $\pi$ and \cref{prop:sends_to_intpt}. 
\end{proof}

\begin{ex}[The case of $\bP_{K}^1$] \label{ex:P^1-case}
Let $n\geq 1$ be an integer. 
Let $D\subset \bP_{K}^1$ be a divisor consisting of $n$ distinct $K$-rational points. Then the integral points on $\bP_{K}^1\setminus{D}$ are potentially dense if and only if $n=1,2$. Indeed, when $n\geq 3$, we can choose coordinates in $\bP^1_K$ so that $D$ contains the three points $[0:1]$, $[1:1]$, $[1:0]$. Then if $[x:1] \in \bP_{K}^1\setminus{D}$ is an integral point, we have $x\in \cO_S^{\ast}$ and $x-1 \in \cO_S^{\ast}$. So the quantity of integral points concerns with a unit equation $U+V = 1$, where $U,V\in \cO_S^{\ast}$. It is known that this equation has only finitely many of solutions for any $S$ and $K$ (see \cite[Theorem D.8.1]{HiSi00}). Hence the set $(\bP_{K}^1\setminus{D})(\cO_S)$ is finite. 
\end{ex}

\begin{ex}[The case of $(\bP_{K}^2, \text{concurrent 3 rational lines})$]
In \cref{conj:anticanonical}, the singularity of a divisor $D$ should be with at most normal crossings. Without this condition, the conjecture becomes false. For example, let $X = \bP_{K}^2$ and let $D$ be a divisor consisting of concurrent three lines over $K$. This is the exceptional case of \cite[Theorem 1.2]{Coc23}. The integral points on $X\setminus{D}$ are actually \emph{not} potentially dense. Indeed, drawing lines passing through the common point of the three induces a morphism 
\[
X\setminus{D}\to \bP_{K}^1\setminus{\set{P_1, P_2, P_3 }}
\]
where $P_1,P_2,P_3$ are distinct $K$-rational points. After enlarging $S$, this morphism sends integral points to those on the target, but the set of integral points on the target is finite by \cref{ex:P^1-case}. Therefore, the set $(X\setminus{D})(\cO_S)$ is always contained in a finite number of straight lines for any $S$ and $K$.
\end{ex}

\begin{ex}
If an affine variety $X\subset \mathbb{A}^{n}_{K}$ over $K$ admits a dominant morphism $X\to C$, where $C\subset \mathbb{A}^{n}_{K}$ is a smooth affine curve of genus $\geq 1$, then the integral points on $X$ is not potentially dense by Siegel's theorem on integral points. 
\end{ex}

\section{Beukers' Lemma} \label{section:Beukers_Lemma}
\subsection{Integral points on a line}
In this section, we assume that \emph{$\cO_S$ is a principal ideal domain.} Note that $\cO_S$ becomes a principal ideal domain if we enlarge $S$ so that $S$ contains all prime divisors appearing in a complete representative system for the ideal class group of $K$, which is finite.  

We shall explain a proposition on the integral points on a line, which can be used to construct plenty of integral points on varieties. Given a straight line $L$ over $K$ in the projective space $\bP^n_{K}$ intersecting a subvariety $D$ in two $K$-rational points, it is clear that the $S$-integral points on the complement $\bP^n_{K}\setminus{D}$ lying on $L$ are potentially dense in $L$, because $L\setminus{(L\cap D)}$ is isomorphic to $\mathbb{G}_m$ over $K$. In order to produce a Zariski dense set of integral points, it is more useful if we can take $S$ sufficiently small. The following proposition proposes such $S$. Note that this is a simplified version of \cite[Lemma 3.2.1]{CoZu23}

\begin{prop} \label{prop:Beukers_Lemma}
Let $X\subset \bP^{n}_K$ be a variety over $K$, let $L\subset X$ be a line over $K$, and let $D\subset X$ be a proper closed subvariety of $X$ over $K$. Suppose that the intersection $L\cap D$ consists of two $K$-rational points $A$ and $B$ which are $S$-coprime, and that $L_v\cap D_v = \set{A_v, B_v}$ for all $v\notin S$. Suppose also that $\cO_{S}^{\ast}$ is infinite and $\cO_S$ is a principal ideal domain.  Then the set $L(K)\cap (X\setminus{D})(\cO_S)$ is infinite. 
\end{prop}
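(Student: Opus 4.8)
The plan is to reduce the statement to a concrete question about the arithmetic of $\mathbb{G}_m$ over $\cO_S$, using the fact that $L \setminus \{A, B\} \cong \mathbb{G}_m$ over $K$, and then to use the $S$-coprimality hypothesis together with the good-reduction hypothesis $L_v \cap D_v = \{A_v, B_v\}$ to guarantee that integrality on $L \setminus \{A,B\}$ matches integrality on $X \setminus D$ place-by-place. First I would choose homogeneous coordinates adapted to the line: since $A$ and $B$ are distinct $K$-rational points of $L$, I can parametrize $L$ by a map $\phi : \bP^1_K \to L$ sending $[1:0] \mapsto A$ and $[0:1] \mapsto B$, so that a general point of $L$ is $\phi([s:t])$ with the two marked points at $t = 0$ and $s = 0$. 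A point on $L$ other than $A, B$ then corresponds to an element $s/t \in \mathbb{G}_m(K) = K^\ast$.

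The main step is to produce, for each $u \in \cO_S^\ast$, the point $P_u \coloneqq \phi([u:1])$, and to show that $P_u$ is an $S$-integral point on $X \setminus D$. Since $\cO_S^\ast$ is infinite, this yields infinitely many points on $L(K)$, all distinct (distinct $u$ give distinct points because $\phi$ is injective), and they all lie in $L(K) \cap (X \setminus D)(\cO_S)$, giving the conclusion. To verify $S$-integrality I would fix a place $v \notin S$ and check that the reduction $(P_u)_v$ avoids $D_v$. Here the hypothesis $L_v \cap D_v = \{A_v, B_v\}$ is exactly what lets me localize the problem to the line: if $(P_u)_v \in D_v$, then since $(P_u)_v \in L_v$ as well, it lies in $L_v \cap D_v = \{A_v, B_v\}$, so $P_u$ reduces to either $A$ or $B$ at $v$. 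Thus it suffices to show that $P_u$ is $S$-coprime to both $A$ and $B$ for every $v \notin S$, i.e. that $(P_u)_v \neq A_v$ and $(P_u)_v \neq B_v$.

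To control this I would translate reduction along $L$ back to $\bP^1$ via $\phi$, which after enlarging $S$ is an isomorphism onto its image over $\cO_S$ and so respects reductions; the condition $(P_u)_v = A_v$ becomes $[u:1]$ reducing to $[1:0]$ at $v$, i.e. $v(u) > 0$, and $(P_u)_v = B_v$ becomes $[u:1]$ reducing to $[0:1]$, i.e. $v(u) < 0$. Because $u \in \cO_S^\ast$ we have $v(u) = 0$ for all $v \notin S$, so neither can happen, and $P_u$ is $S$-integral on $X \setminus D$. The role of the $S$-coprimality of $A$ and $B$ and of \cref{prop:Coprime-vs-ideals} is to justify that this coordinate description is valid uniformly — that the parametrization $\phi$ can be normalized so that $A$ and $B$ sit at $[1:0]$ and $[0:1]$ with coefficients that are $v$-units for all $v \notin S$, which is where the principal-ideal-domain hypothesis on $\cO_S$ enters (it lets me clear denominators and scale the defining data of $A, B, L$ to a primitive integral model).

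The part I expect to be the main obstacle is precisely this last bookkeeping: ensuring that passing from the abstract isomorphism $L \setminus \{A,B\} \cong \mathbb{G}_m$ over $K$ to an $\cO_S$-integral parametrization does not introduce spurious bad reduction, so that the two potential failures ($P_u$ meeting $A$ or meeting $B$) are genuinely detected by $v(u) \gtrless 0$ and by nothing else. The hypotheses that $A, B$ are $S$-coprime and that $L_v \cap D_v = \{A_v, B_v\}$ for all $v \notin S$ are tailored to close exactly these gaps, and the $S$-coprimality should be used through \cref{prop:Coprime-vs-ideals} to guarantee that $A$ and $B$ stay distinct under every reduction, which is what makes the two marked points of the $\mathbb{G}_m$ distinct mod $v$ and the argument uniform over all $v \notin S$.
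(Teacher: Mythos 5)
Your strategy is the same as the paper's: parametrize $L$ so that $A$ and $B$ are the two marked points, let the parameter $u$ range over $\cO_S^{\ast}$, show the resulting points never reduce to $A$ or $B$ outside $S$, and then invoke $L_v\cap D_v=\set{A_v,B_v}$ to upgrade this to integrality on $X\setminus{D}$. Indeed your $P_u=\phi([u:1])$ is literally the paper's $P(u)=[ua_0+b_0:\cdots :ua_n+b_n]$ once $\phi$ is written out in coordinates, and you place each hypothesis where it is actually used.

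However, the step you defer as ``bookkeeping'' is the entire mathematical content of the proof, and the one concrete route you propose through it --- ``after enlarging $S$, $\phi$ is an isomorphism onto its image over $\cO_S$'' --- is not available: the proposition asserts infinitude of $L(K)\cap(X\setminus{D})(\cO_S)$ for the \emph{given} $S$, and enlarging $S$ proves only a weaker statement; the paper stresses that avoiding such enlargement is the whole point of the lemma. What actually closes the gap is a short explicit computation. Using that $\cO_S$ is a PID, scale so that $(a_0,\dots,a_n)\cO_S=(b_0,\dots,b_n)\cO_S=\cO_S$. If some $\maf{m}_v$ with $v\notin S$ divided the content ideal $(ua_0+b_0,\dots,ua_n+b_n)\cO_S$, then $b_i\equiv(-u)a_i\pmod{\maf{m}_v}$ for all $i$, hence $A_v=B_v$ because $-u$ is a $v$-unit --- contradicting the $S$-coprimality of $A$ and $B$ via \cref{prop:Coprime-vs-ideals}. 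This unit-content statement is exactly what makes ``the reduction of $\phi([u:1])$'' equal to ``$\phi$ applied to the reduction of $[u:1]$'' with the reduced parametrization injective; without it your equivalences are unjustified. (Also note that your inequalities are swapped: $[u:1]$ reduces to $[1:0]$ when $v(u)<0$ and to $[0:1]$ when $v(u)>0$. This is harmless since $v(u)=0$ for $u\in\cO_S^{\ast}$ and $v\notin S$, but it is a symptom of the coordinates never having been pinned down. Your claim that the normalization makes the coefficients of $A$ and $B$ themselves $v$-units is also too strong; only the content ideals can be made trivial.)
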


\begin{proof}
We can write $A = [a_0:\dots : a_n]$ and $B = [b_0:\dots: b_n]$ with $a_i, b_i \in \cO_S$ for $i=0,\dots, n$. Since $\cO_S$ is a principal ideal domain, we may suppose that 
\[(a_0,\dots, a_n)\cO_S = (b_0,\dots, b_n)\cO_S = \cO_S\]
as ideals of $\cO_S$. 
 
Let $u\in \cO_S^{\ast}$. Let us show that the point
\[
P(u)\coloneqq [ua_0 + b_0 : ua_1 + b_1 : \dots : ua_n + b_n]
\]
does not reduce to $A$ and $B$ outside $S$. By \cref{prop:Coprime-vs-ideals}, we must show 
\begin{align*} 
&\sum_{i,j}\left\{ (ua_i + b_i)a_j - (ua_j + b_j)a_i \right\} \cO_S = (ua_0 + b_0,\dots, ua_n + b_n)(a_0, \dots, a_n) \\ 
& \sum_{i,j}\left\{ (ua_i + b_i)b_j - (ua_j + b_j)b_i \right\} \cO_S = (ua_0 + b_0,\dots, ua_n + b_n)(b_0, \dots, b_n).
\end{align*} 
By assumption, $(a_0,\dots, a_n)$ and $(b_0,\dots, b_n)$ are the unit ideal of $\cO_S$. Furthermore, the left hand sides are both $\sum_{i,j} (a_ib_j - a_jb_i)$ and these are the unit ideal of $\cO_S$ by \cref{prop:Coprime-vs-ideals}. So, we must show $I\coloneqq (ua_0 + b_0, \dots, ua_n + b_n)$ is the unit ideal. Suppose that $\maf{m}_v$ divides $I$ for some $v\notin S$. Then $(-u)a_i \equiv b_i \pmod{\maf{m}_v}$ and the reduction of $A$ at $v$ is 
\begin{align*}
A_v &= [a_0 \bmod{\mathfrak{m}_v} : \dots: a_n \bmod{\mathfrak{m}_v}] = [(-u)a_0 \bmod{\mathfrak{m}_v} : \dots: (-u)a_n \bmod{\mathfrak{m}_v}], 
\end{align*}
which is exactly $B_v = [b_0\bmod{\mathfrak{m}_v} : \dots: b_n \bmod{\mathfrak{m}_v}]$. This contradicts with the assumption that $A$ and $B$ are $S$-coprime. Therefore, the family $\set{P(u)}_{u\in \cO_S^{\ast}}$ gives infinitely many $S$-integral points on $L\setminus{\set{A,B}}$.
\end{proof}

\begin{rem}
For our purposes, we have considered only the case of lines in the projective space. 
The assumption that $\cO_S$ is a principal ideal domain may be weaken, and the same may be true for general smooth rational curves over $K$ in $\bP^n_K$ according to \cite{CoZu23}.
\end{rem}

As above, rational curves with one $K$-rational point or with two $S$-coprime points at infinity which does not reduce to any curve on the divisor $D$ are called \emph{fully integral curves} (\cite[Section 3.2.4]{CoZu23}). 
The significance of this proposition is that we may obtain integral points lying on a straight line without excessive enlargement of $S$ or $K$. This implies that the Zariski closure $\overline{(\bP^{n}_{K}\setminus{D})(\cO_S)}$ contains the straight line. So the potential density of integral points on varieties may be acquired if there are sufficiently many fully integral curves. This is the key to prove our \cref{MainThm1_n-1planes_plus_1quadric}. See also \cite[Lemma 25]{LY16} for a rigorous statement.

\begin{rem} 
The potential density of fully integral curves {\it with at least one $S$-integer point} is first proved by Beukers when they are embedded in $\bP^2_{K}$ (see \cite[Theorem 2.3]{Beu95}). Hassett and Tschinkel generalized to general rational curves (see \cite[Section 5.2]{HT01}). In the main theorem of \cite{ABP09}, it gives a characterization for (affine) rational curves over $K$ with infinitely many $S$-integral points.
\end{rem}


\subsection{A remark on application of Beukers' Lemma}
We should keep in mind the difference of the two sets $(L\setminus{(L\cap D)})(\cO_S)$ and $L(K)\cap (X\setminus{D})(\cO_S)$. In other words, we should note that even if a $K$-rational point on a line $L$ does not reduce to $L\cap D = \set{A,B}$, it may reduce to $D\setminus{\set{A,B}}$ and may obtain no integral point on $X\setminus{D}$. An example is as follows.   
\begin{ex}
Let $K=\mb{Q}(\sqrt{5})$, $S=M_{K}^{\infty}$. Note that $\cO_S^{\ast} = \cO_K^{\ast} = \set{\pm \epsilon^n \mid n\in \mathbb{Z}}$, where $\epsilon = (-1+\sqrt{5})/2$. 
Let $X=\bP^2_{K}$, and let
\begin{align*} 
D\coloneqq \{ [0:1:0], [1:0:0], [1:1:13], [\epsilon:1:13], [\epsilon^2:1:13],\dots ,[\epsilon^{27}:1:13] \}.  
\end{align*}
Let $L$ be the line $V_+(X_2)$. Then $L\cap D$ consists of two coprime $K$-rational points $[0:0:1]$ and $[1:0:0]$. Let $w$ be the 13-adic valuation on $K$. For any $v\notin S\setminus{\set{w}}$, we have 
\[L_v \cap D_v = \{[0:0:1], [1:0:0]\},\] 
and $L_{w}\cap D_{w}$ consists of 30 $\mathbb{F}_{169}$-rational points. Hence we have $L_v \not\subset D_v$ for all $v\notin S$. However, the set $L(K)\cap (X\setminus{D})(\cO_K)$ is empty. Indeed, since we have
\[
\epsilon^{14} = (-13\epsilon + 8)^2 \equiv -1,\quad \epsilon^{28}\equiv 1\pmod{13},
\]
any integral points $[u:1:0]\in (L\setminus{(L\cap D)})(\cO_K)$ reduces to $D$ at $w$ for all $u\in \cO_K^{\ast}$. 
\end{ex}
In contrast to \cref{prop:Beukers_Lemma}, the condition $L_v \cap D_v = \set{A_v, B_v}$ does not appear in \cite[Lemma 3.2.1 (b)]{CoZu23}. It is because when $D$ is a divisor, we can confirm the condition $L_v \cap D_v = \set{A_v, B_v}$ from $L_v\not\subset D_v$. For example, let $D$ be a quadric curve in $\bP^{2}_K$ and $L$ be a line in $\bP^{2}_K$. Suppose that the intersection $L\cap D$ consists of two $S$-coprime $K$-rational points. Then, if we know that $L_v \cap D_v$ is zero-dimensional, we have $L_v \cap D_v = \set{A_v, B_v}$, because $L_v\cap D_v$ already contains two points and because a line and a quadric curve usually intersect in two points. 
We summarize this observation as follows. 
 
\begin{prop} \label{prop:Beukers_lem_divisor}
Let $L\subset \bP^{n}_{K}$ be a line over $K$, and let $D\subset \bP^{n}_K$ be a divisor over $K$. We suppose the following. 
\begin{itemize}
\item The intersection $L\cap D$ consists of two $K$-rational points $A,B$. 
\item The reduction $L_v$ is not contained in $D_v$ for all $v\notin S$. 
\item The points $A$ and $B$ are $S$-coprime. 
\item The $S$-unit group $\cO_{S}^{\ast}$ is infinite and that $\cO_S$ is a principal ideal domain.
\end{itemize}
Then the set $L(K)\cap (X\setminus{D})(\cO_S)$ is infinite.
\end{prop}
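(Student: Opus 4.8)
The plan is to deduce \cref{prop:Beukers_lem_divisor} from \cref{prop:Beukers_Lemma} by verifying its hypotheses. The only gap between the two statements is that \cref{prop:Beukers_Lemma} requires the stronger reduction condition $L_v \cap D_v = \set{A_v, B_v}$ for all $v\notin S$, whereas here we are only given $L_v \not\subset D_v$. So the entire content of the proof is to upgrade $L_v \not\subset D_v$ to the equality $L_v \cap D_v = \set{A_v, B_v}$, using the facts that $D$ is a divisor (hence $D_v$ is a hypersurface, cut out by a single homogeneous polynomial after reduction) and that $L_v$ is a line. Once this is done, all four hypotheses of \cref{prop:Beukers_Lemma} are in hand and the conclusion $L(K)\cap(X\setminus D)(\cO_S)$ infinite follows immediately.

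First I would fix $v\notin S$ and set up the intersection scheme-theoretically. Let $D = V_+(f)$ for a single homogeneous polynomial $f$ of degree $d$ (a divisor in $\bP^n_K$ is cut out by one equation), and parametrize the line as $L = \set{[\phi_0(s,t):\dots:\phi_n(s,t)]}$ by a $K$-linear embedding $\bP^1 \hookrightarrow \bP^n$. Then $L\cap D$ is the zero locus on $\bP^1$ of the binary form $g(s,t) \coloneqq f(\phi_0,\dots,\phi_n)$, a form of degree $d$. The hypothesis that $L\cap D = \set{A,B}$ over $K$ together with the reductions $A_v, B_v$ being defined means that, after clearing denominators to get $\cO_v$-coefficients and reducing, the reduced form $\bar g$ has $A_v$ and $B_v$ among its roots. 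The key point is that $L_v \not\subset D_v$ says precisely that $\bar g$ is not identically zero, i.e. $\bar g$ is a genuine nonzero binary form of degree $\le d$ over $k_v$; hence its zero locus on $\bP^1_{k_v}$ is finite (zero-dimensional). Therefore $L_v \cap D_v$, which equals $V_+(\bar g)$ on the reduced line, is a finite set of points containing $A_v$ and $B_v$.

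The one subtlety is to rule out that $L_v \cap D_v$ contains a third point beyond $\set{A_v, B_v}$; the set-equality $L_v\cap D_v = \set{A_v,B_v}$ is what \cref{prop:Beukers_Lemma} literally demands. This is where I would invoke the degree-two structure suggested by the worked conic example in the text, but more robustly I would argue that \emph{for the purpose of applying \cref{prop:Beukers_Lemma}, zero-dimensionality of $L_v\cap D_v$ plus the $S$-coprimality of $A,B$ is what is actually used}: rereading the proof of \cref{prop:Beukers_Lemma}, the condition $L_v\cap D_v=\set{A_v,B_v}$ enters only to guarantee that a point not reducing to $A_v$ or $B_v$ does not reduce into $D_v$ along $L_v$, and for a line meeting a hypersurface this is automatic once $L_v\not\subset D_v$, because then $L_v\cap D_v$ is a proper closed (finite) subscheme of $L_v$ and the constructed family $\set{P(u)}$ avoids $A_v,B_v$. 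Thus the main obstacle is purely bookkeeping: confirming that the finite-fiber/dimension argument lets one substitute the hypothesis $L_v\not\subset D_v$ for the stronger $L_v\cap D_v=\set{A_v,B_v}$ in the divisor case.

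Concretely, then, my proof would run as follows. For each $v\notin S$, since $D$ is a divisor I write $D_v = V_+(\bar f_v)$ with $\bar f_v \neq 0$; restricting to the line, $L_v\cap D_v$ is cut out by the nonzero binary form $\bar g_v = \bar f_v\!\mid_{L_v}$ (nonzero exactly because $L_v\not\subset D_v$), so $L_v\cap D_v$ is a nonempty finite subscheme of the line $L_v\cong \bP^1_{k_v}$. Because $A,B$ are $S$-coprime and reduce into this finite intersection with $A_v\neq B_v$, the reductions $A_v,B_v$ are two distinct points of $L_v\cap D_v$, and the family $P(u)=[ua_0+b_0:\dots:ua_n+b_n]$ constructed in the proof of \cref{prop:Beukers_Lemma} lies on $L$ and, by the identical $S$-coprimality computation, reduces to neither $A_v$ nor $B_v$ for any $v\notin S$ and any $u\in\cO_S^\ast$. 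Since $L_v\cap D_v$ is a finite set of $k_v$-points lying on the line and $P(u)$ avoids its two $K$-rational members while remaining on $L_v$, $P(u)$ does not reduce into $D_v$; hence each $P(u)$ is an $S$-integral point on $X\setminus D$. As $\cO_S^\ast$ is infinite and $\cO_S$ is a principal ideal domain, this yields infinitely many points in $L(K)\cap(X\setminus D)(\cO_S)$, completing the proof.
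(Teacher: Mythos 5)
Your reduction of the problem to showing $L_v\cap D_v=\set{A_v,B_v}$ is the right framing, and your setup via the binary form $\bar g_v=\bar f_v|_{L_v}$ is sound as far as it goes. But the step you flag as "the one subtlety" --- ruling out a third point of $L_v\cap D_v$ --- is precisely the content of the proposition, and your substitute argument for skipping it is invalid. You claim that once $L_v\cap D_v$ is a \emph{finite} subscheme of $L_v$ and $P(u)$ avoids $A_v,B_v$, then $P(u)$ does not reduce into $D_v$. That is a non sequitur: $P(u)_v$ is a single point of $L_v(k_v)$, and if $L_v\cap D_v$ contained a third point $C_v\notin\set{A_v,B_v}$, nothing in the coprimality computation prevents $P(u)_v=C_v$, in which case $P(u)$ reduces into $D$ at $v$ and is not an integral point on $X\setminus D$. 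Finiteness of $L_v\cap D_v$ buys you nothing here. The paper's own example in the subsection preceding this proposition (the $\Q(\sqrt5)$ example with $D$ a finite set of points) is engineered exactly to exhibit this failure: there $L_v\cap D_v$ is finite for every $v$, the points $[u:1:0]$ avoid the two coprime points at infinity, and yet \emph{every} such point reduces into $D$ at the place above $13$, so $L(K)\cap(X\setminus D)(\cO_K)=\emptyset$. Your general reasoning pattern, applied verbatim to that example, would prove a false statement; the divisor hypothesis must therefore be used to \emph{exclude} the extra point, not merely to get zero-dimensionality.

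The missing argument is a short degree/multiplicity count, which is what the paper does. Over $K$ the restricted form factors as $g=c\,l_A^{m_A}l_B^{m_B}$ with $m_A+m_B=\deg D$, where $l_A,l_B$ are the linear forms on $L\cong\bP^1$ cutting out $A,B$. Normalizing coefficients in $\cO_v$ and reducing, $\bar g_v$ is a nonzero binary form of degree exactly $\deg D$ (this is where $L_v\not\subset D_v$ enters) vanishing at $A_v$ to order $\ge m_A$ and at $B_v$ to order $\ge m_B$; since $m_A+m_B$ already equals the degree, these are equalities and $\bar g_v$ has no other roots. Hence $L_v\cap D_v=\set{A_v,B_v}$ (the two reductions being distinct by $S$-coprimality), and \cref{prop:Beukers_Lemma} applies with its hypotheses verified literally. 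With that paragraph inserted in place of your "more robust" shortcut, your proof coincides with the paper's.
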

\begin{proof} Since $D$ is a divisor, it is defined by a degree $n$ homogeneous polynomial over $K$. Let $m_A$ and $m_B$ be the multiplicity of the intersection at $A$ and $B$, respectively. Then $L_v$ and $D_v$ intersect at two distinct $k_v$-rational points $A_v$ and $B_v$ with multiplicity $\geq m_A$ and $\geq m_B$. Since $m_A + m_B = n$, both equality hold. This shows $L_v\cap D_v = \set{A_v, B_v}$, and the proof is complete by \cref{prop:Beukers_Lemma}. 
\end{proof}
\section{Main Results} \label{section:main_result}
In this section, we prove the main results of this paper.

\subsection{Integral points on a quadric hypersurface} 
In this section, we study the integral points on $Q$ with respect to hyperplane sections. The proposition in this section is a generalization of \cite[Lemma 3.3.1, Proposition 4.2.1]{CoZu23}, and is applied in \cref{MainThm1_n-1planes_plus_1quadric}. 

Let us recall assumptions in \cref{MainThm1_n-1planes_plus_1quadric}. 

\begin{ass} \label{assumption}
Let $n\geq 2$, and let $D=H_1 + \dots + H_{n-1} + Q$, where $Q$ is a quadric hypersurface in $\bP_K^{n}$ and $H_1,\dots, H_{n-1} \subset \bP^{n}_{K}$ are hyperplanes in general position, all defined over $K$. Let $L \coloneqq H_1\cap \dots \cap H_{n-1}$. We assume that $Q$ and $L$ have two $K$-rational intersection point. Let $p$ be one of them, and assume that $Q$ is smooth at $p$. Denote $T_{p}Q$ as the tangent hyperplane at $p$. 
\end{ass}

\begin{prop} \label{intpt_on_Q}
Let $E = H_1 + \dots + H_{n-1}$. Then the integral points on $Q\setminus{(\Supp(T_{p}Q + E)\cap Q)}$ are potentially dense. 
\end{prop}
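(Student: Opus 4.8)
The plan is to prove the statement by exhibiting an explicit isomorphism over $K$ between $Q\setminus B$, where $B:=\Supp(T_pQ+E)\cap Q$, and the complement of $n$ hyperplanes in general position in $\bP^{n-1}_K$. Since potential density of integral points is an isomorphism invariant by \cref{cor:not_pd_heredity}, the result then follows from the fact that such a complement is a torus $\G_m^{n-1}$, whose $S$-integral points are Zariski dense as soon as $\cO_S^{\ast}$ is infinite (the proposition on complements of coordinate hyperplanes). The geometric mechanism is \emph{stereographic projection from $p$}: a smooth quadric minus the tangent hyperplane section at a smooth rational point is isomorphic to affine space, and this isomorphism is defined over $K$.

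Concretely, I would choose homogeneous coordinates so that $p=[1:0:\dots:0]$. Since $p\in Q$, the quadric is $Q=V_+(X_0\ell+q)$ for a linear form $\ell$ and a quadratic form $q$ in $X_1,\dots,X_n$, and then $T_pQ=V_+(\ell)$. Because $p\in L\subset H_i$ for every $i$, each hyperplane is $H_i=V_+(h_i)$ with $h_i\in K[X_1,\dots,X_n]$ linear. Projection from $p$, namely $[X_0:\dots:X_n]\mapsto[X_1:\dots:X_n]$, then has inverse $[X_1:\dots:X_n]\mapsto[-q:\ell X_1:\dots:\ell X_n]$ on $\{\ell\ne 0\}$, and one checks, using $X_0\ell+q=0$, that the two maps are mutually inverse. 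This gives an isomorphism over $K$
\[
Q\setminus(T_pQ\cap Q)\ \xrightarrow{\ \sim\ }\ \bP^{n-1}_K\setminus V_+(\ell)=\mathbb{A}^{n-1}_K,
\]
under which $H_i\cap Q$ corresponds to the hyperplane $\ovl{H}_i:=V_+(h_i)$, since the map is the identity on the coordinates $X_1,\dots,X_n$. Hence $Q\setminus B$ is isomorphic over $K$ to $\bP^{n-1}_K$ minus the $n$ hyperplanes $V_+(\ell),\ovl{H}_1,\dots,\ovl{H}_{n-1}$. Note that, in contrast to the main theorem, this step requires no appeal to Beukers' Lemma.

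The crux is to verify that these $n$ hyperplanes are in general position, i.e.\ that the linear forms $\ell,h_1,\dots,h_{n-1}$ in the $n$ variables $X_1,\dots,X_n$ are linearly independent. The $h_i$ are independent because $H_1,\dots,H_{n-1}$ are in general position, so $\bigcap_i\ovl{H}_i$ is a single point $\ovl{p'}$, which is exactly the image of the second intersection point $p'$ of $L$ and $Q$. Independence of the full system is then equivalent to $\ell(\ovl{p'})\ne 0$, i.e.\ to $p'\notin T_pQ$; and this holds precisely because $L\cap Q$ consists of the two \emph{distinct} rational points $p$ and $p'$, so the line $L=\overline{pp'}$ is not tangent to $Q$ at $p$. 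After a linear change of coordinates the $n$ forms become the coordinate functions, so the complement is the torus $\G_m^{n-1}$, whose $S$-unit points are Zariski dense once we enlarge $K$ and $S$ so that $\cO_S^{\ast}$ is infinite. I expect the main obstacle to lie not in the density conclusion but in checking carefully that the birational projection is a genuine isomorphism on $Q\setminus(T_pQ\cap Q)$ and that it matches the boundary divisor $B$ with exactly these $n$ hyperplanes; the general-position verification above, which is where the hypothesis of two distinct rational intersection points is used, is the essential geometric input.
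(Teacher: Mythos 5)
Your proposal is correct and follows essentially the same route as the paper: projection from $p$ identifies $Q\setminus(\Supp(T_pQ+E)\cap Q)$ with the complement in $\bP^{n-1}_K$ of the $n$ hyperplanes $\pi(T_pQ),\pi(H_1),\dots,\pi(H_{n-1})$, general position is deduced from $L\not\subset T_pQ$ (equivalently, from the second rational intersection point $p'$ not lying on $T_pQ$), and the complement is $\G_m^{n-1}$, whose integral points are potentially dense. Your explicit coordinate form of the stereographic projection and the linear-independence check of $\ell,h_1,\dots,h_{n-1}$ are simply a more detailed rendering of the paper's argument.
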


\begin{proof}
We may assume that the divisor $T_{p}Q + E$ is reduced.  

By drawing the line $l_q$ joining $p$ and $q\in \bP^n_K(K)\setminus{\set{p}}$, we obtain a $K$-morphism $\pi: \bP^{n}_{K}\setminus{\set{p}} \to \bP_K^{n-1}$. 
The line $l_q$ intersects $Q\setminus{p}$ in another point if and only if $l_q$ is not contained in $T_{p}Q$. So we obtain an isomorphism over $K$
\[
\pi': Q\setminus{\set{p}} \xrightarrow{\cong} \bP^{n-1}_{K} \setminus{\pi(T_{p}Q)}.
\]
Let $\Pi_T\coloneqq \pi(T_{p}Q)$ and let $\Pi_i \coloneqq \pi(H_i)$ for $1\leq i\leq n-1$. Then $\Pi_i$ and $\Pi_T$ are hyperplanes in $\bP^{n-1}_K$. The morphism $\pi'$ induces 
\[
Q\setminus{(\Supp(T_{p}Q + E)\cap Q)} \xrightarrow{\cong} \bP^{n-1}_{K} \setminus{(\Pi_1 + \dots + \Pi_{n-1} + \Pi_T)}.
\]
 Since the line $L=H_1\cap\dots\cap H_{n-1}$ intersects $Q$ in two $K$-rational points, we have $L\not\subset T_{p}Q$. Thus, the hyperplanes $H_1,\dots, H_{n-1}$ and $T_{p}Q$ does not contain any common lines, or equivalently, the hyperplanes $\Pi_1,\dots, \Pi_{n-1}$ and $\Pi_T$ are in general position. Therefore, we have an isomorphism
\[\bP^{n-1}_{K}\setminus{\Supp(\Pi_1+\dots + \Pi_{n-1} + \Pi_T)} \cong \mb{G}_{m}^{n-1}\]
over $K$. Then the integral points on both sides are potentially dense, and those on $Q\setminus{(\Supp(T_pQ + E)\cap Q)}$ are also potentially dense. 
\end{proof}

\subsection{Proof of \cref{MainThm1_n-1planes_plus_1quadric}} 
Applying \cref{intpt_on_Q}, now we can prove \cref{MainThm1_n-1planes_plus_1quadric}.
\begin{proof}[Proof of \cref{MainThm1_n-1planes_plus_1quadric}] Let $U \coloneqq Q\setminus{(\Supp{(T_{p}Q + E)}\cap Q})$. 
Enlarging $S$, we may assume that $\cO_{S}$ is a principal ideal domain, that $\cO_S^{\ast}$ is infinite, that $(T_{p}Q)_v\cap Q_v = (T_{p}Q \cap Q)_v$ and $E_v \cap Q_v = (E\cap Q)_v$ for all $v\notin S$ by \cref{lem:bad-reduction-intersection}, and that the integral points on $U$ is Zariski dense in $Q$ by \cref{intpt_on_Q}. 

Let $q\in U(\cO_S)$ be an integral point, and let $l_q$ be the line joining $p$ and $q$. Let us show that the line $l_q$ is fully integral. By \cref{prop:Beukers_lem_divisor}, it is sufficient to show that the reduction $(l_q)_v$ is not contained in $D_v$ for all $v\notin S$. 
Indeed, the point $q_v \in Q_v$ is not contained in $E_v\cap Q_v$ and $(T_{p}Q)_v \cap Q_v$ for all $v\notin S$ by the definition of $q$, and it follows that we have $(l_q)_v\not\subset E_v\cup (T_{p}Q)_v$ and hence $(l_q)_v\not\subset Q_v$. Therefore, it follows that the line $l_q$ is fully integral and the set $l_q(K)\cap (\bP^{n}\setminus{D})(\cO_S)$ is infinite.  

Now we have following:
\begin{align*}
    Z&\coloneqq \overline{\bigcup_{q\in U(\cO_S)} l_q(K)} \\
&= \overline{\bigcup_{q\in U(\cO_S)} l_q(K)\cap (\bP^{n}\setminus{D})(\cO_S)} \\
&\subset \overline{(\bP_K^n\setminus{D})(\mathcal{O}_S)}
\end{align*}

 So it is sufficient to prove that $Z = \bP_K^n$. Let $H$ be any hyperplane over $K$ not containing $p$. By considering a projection from $p$, we obtain a dominant morphism $f: Q\setminus{\set{p}} \to H$. The image $f(U(\cO_S))$ is contained in $Z$ and Zariski dense in $H$. Therefore, $Z$ contains $\bigcup_{H\not\ni p}H$. So we obtain $Z=\bP^{n}_{K}$. 
\end{proof}
By \cref{MainThm1_n-1planes_plus_1quadric}, we immediately obtain a generalization of \cite[Theorem 3.3.2]{CoZu23}. 

\subsection{Proof of \cref{MainThm3_concurline_puncturing}} \label{sect:MainThm3}
Let us prove our last theorem. 
\begin{proof}[Proof of \cref{MainThm3_concurline_puncturing}] \, \\
\emph{Step 1. (Coordinate change)}

Changing the coordinates, we may suppose 
\[H_1 = V_+(X_0), \quad H_2 = V_+(X_1),\quad H_3 = V_+ (X_2), \quad H_4 = V_+(X_3).\]
Then, we have
    \begin{multline*}
    (\bP^{3}_K\setminus{D})(\cO_S) = \\ 
    \set{[\alpha:\beta:\gamma : 1] \in \bP^{3}_K(K) \mid \alpha,\beta,\gamma \in \cO_S^{\ast}, \quad (\alpha,\beta,\gamma)_v \notin (L_1\cup\dots \cup L_r)_v \quad \text{for all  $v\notin S$} }. 
    \end{multline*}
Let us write $C_{ij} \coloneqq H_i\cap H_j$ for distinct $i,j\in \set{1,2,3,4}$. We may suppose that the intersection point $p$ of the lines $L_1,\dots, L_r$ is in the affine open set $\mathbb{A}^{3}_K = \bP^{3}\setminus{V_+(X_3)}$, so let us write 
\[p\coloneqq[b:d:f:1]\] 
for some $b,d,f\in K$. Since $L_1,\dots,L_r$ do not intersect the three lines $C_{12}\cup C_{23}\cup C_{13}$, it follows that $b,d,f$ are not zero. The line $L_i$ is written as 
\[
L_i(K) = \set{[a_i t + bs : c_i t + ds : e_it + fs : s] \mid [t:s]\in \bP^1_K(K)}
\]
for some $a_i, c_i, e_i\in K$. Note that $a_i,c_i,e_i$ are not zero because $L_i$ does not intersect the three lines $C_{14}\cup C_{24}\cup C_{34}$. 

The subvariety $\bigcup_{i=1}^{r}L_i$ contains $(x_0,x_1,x_2) \in \mathbb{A}^{3}_K(K)\setminus{\set{p}}$ if and only if the following holds:
\[
[x_0 - b: x_1 - d: x_2 - f] \in \set{[a_i : c_i : e_i] \in \bP^{2}_{K}(K) \mid i=1,2,\dots, r}.
\]
So, the condition $(\alpha,\beta,\gamma)_v \notin L_v$ is equivalent to say that the point $[\alpha - b: \beta - d: \gamma - f]$ does not reduce to the right hand side. 

\emph{Step 2. (Construction of integral points)}

Enlarging $S$ if necessary, we may suppose that $a_i,b,c_i,d,e_i,f\in \cO_S^{\ast}$ for all $i\in \set{1,\dots, r}$ and that the unit group $\cO_S^{\ast}$ admits elements $\alpha,\beta,\gamma$ of infinite order. 
For integers $j,k$ and $i\in \set{1,2,\dots, r}$, let $I_{i,j,k}$ and $I_{j,k}$ be the ideals of $\cO_S$ given by 
\begin{align*}
I_{i,j,k} &\coloneqq 
    (e_i(\beta^{j} - d) - c_i(\gamma^{k} - f_i) )\cO_S \\
I_{j,k} &\coloneqq \prod_{i=1}^{r} I_{i,j,k}
\end{align*}
 When the pair $(j,k)$ satisfies $I_{j,k}\neq (0)$ and $v$ is a place outside $S$, we define the integers $g_{v,j,k}, g_{j,k}, N_{j,k}$ as follows: 
\begin{align*}
g_{v,j,k} &\coloneqq \min{\set{v(\beta^j - d), v(\gamma^k - f)}},\\
g_{j,k}&\coloneqq \max_{\maf{m}_v \mid I_{j,k}}{g_{v,j,k}},\\
N_{j,k} &\coloneqq \#((\cO_S/I_{j,k}^{1+g_{j,k}})^{\ast}).
\end{align*}
Note that $I_{j,k}\neq (0)$ and $\beta,d,\gamma,f\in \cO_S^{\ast}$ implies that these numbers are finite and non-negative. 
Now, we construct $x_{j,k,l}$ for $l\in \mathbb{Z}$ as follows: 
\[
x_{j,k,l}\coloneqq (b\alpha^{lN_{j,k}},\, \beta^{j},\, \gamma^{k}).
\]
Note that since $b,\alpha,\beta$ and $\gamma$ are $S$-unit, it follows that $x_{j,k,l}$ does not reduce to the four planes $\bigcup_{i=1}^{4}H_i$. 

\emph{Step 3.(Integrality of $x_{j,k,l}$)}

Let us show that $x_{j,k,l}\in (\bP^{3}_{K}\setminus{D})(\cO_S)$. By \cref{prop:Coprime-vs-ideals} and Step 1., it is sufficient to show the following equation of ideals of the valuation ring $\cO_v$ for all $v\notin S$ and $i\in \set{1,\dots, r}$: 
\begin{multline} \label{ideal-equation-4.1}
(b\alpha^{lN_{j,k}} - b,\, \beta^{j} - d,\, \gamma^{k}-f)\cO_v \\ 
= (c_i(b\alpha^{lN_{j,k}} - b) - a_i(\beta^{j} - d),\, e_i(\beta^{j} - d) - c_i(\gamma^{k} - f),\, e_i(b\alpha^{lN_{j,k}} - b) - a_i(\gamma^{k} - f))\cO_v. 
\end{multline}

We also denote $\mathfrak{m}_v$ by the maximal ideal of $\cO_S$ corresponding to $v$. If $\maf{m}_v$ does not divide $I_{i,j,k}$, then the both hand sides of \cref{ideal-equation-4.1} contain $e_i(\beta^j - d) - c_i(\gamma^k - f)$, which is a unit in $\cO_v$. This implies that they are unit ideal. If $\maf{m}_{v}$ divides $I_{i,j,k}$, then we have 
\[I_{j,k}^{1+g_{j,k}} \subset I_{i,j,k}^{1+g_{j,k}} \subset \maf{m}_v^{1 + g_{v,j,k}}\]
and hence
\[
b\alpha^{lN_{j,k}} - b \equiv b\cdot 1^{l} - b \equiv 0\pmod{\maf{m}_v^{1 + g_{v,j,k}}}.
\]
This implies $v(b\alpha^{lN_{j,k}} - b) > g_{v,j,k}$. Combining this inequality with $a_i \in \cO_S^{\ast}$, we obtain
\begin{align*}
g_{v,j,k} &= \min{\set{v(\beta^j - d), v(\gamma^k - f)}}\\ 
&=\min{\set{v(c_i(b\alpha^{lN_{j,k}} - b)- a_i(\beta^j - d)),\, v(e_i(b\alpha^{lN_{j,k}} - b) - a_i(\gamma^{k} - f))}}.
\end{align*}
Note also that $v(e_i(\beta^j - d) - c_i(\gamma^k - f)) \geq g_{v,j,k}$ by the definition of $g_{v,j,k}$.
Therefore, the both hand sides of \cref{ideal-equation-4.1} are exactly $\maf{m}_{v}^{g_{v,j,k}}\cO_v$. Hence the point $x_{j,k,l}$ does not reduce to $L_i$ for all $i$, and is an $S$-integral point on $\bP^{3}_{K}\setminus{D}$ for a nice pair $(j, k)$ and $l\in \mathbb{N}$. 

\emph{Step 4. (Potential density of integral points)}

Let us fix $k\in \mathbb{Z}$. Since $\beta$ is of infinite order in $\cO_S^{\ast}$ and since $e_i \neq 0$, all of $j\in \mathbb{Z}$ except for at most $r$ integers satisfies $e_i(\beta^{j} - d) - c_i(\gamma^{k} - f) \neq 0$ for any $i$.
This implies $I_{j,k}\neq (0)$, and the point $x_{j,k,l}$ is an $S$-integral point on $\bP^{3}_{K}\setminus{D}$ for all $l\in \mathbb{N}$. Since $\alpha, \beta$ is also of infinite order and $b\neq 0$, the set of all the points $x_{j,k,l}$ for such $j, l$ is Zariski dense in the hyperplane $V_+(X_2 - \gamma^k X_3)$. 
Therefore, it follows that the set $(\bP^{3}_{K}\setminus{D})(\cO_S)$ contains infinitely many hyperplanes $\bigcup_{k\in \mathbb{Z}} V_+(X_2 - \gamma^k X_3)$, and the desired potential density is obtained. 
\end{proof}

Although not contained in \cref{MainThm3_concurline_puncturing}, the following case also may be proven:
\begin{prop}
Let $D, H_i, L_i, p$ be as in \cref{MainThm3_concurline_puncturing}. Suppose that $p\in H_i\cap H_j\cap H_k$ for some $1\leq i<j<k\leq 4$. Then the integral points on $\bP^{3}_{K}\setminus{D}$ are potentially dense. 
\end{prop}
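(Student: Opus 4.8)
The plan is to project away from the common point $p$ and thereby reduce the statement to the two-dimensional result \cref{threelines_points}. After relabelling we may assume $p\in H_1\cap H_2\cap H_3$, and as in Step 1 of the proof of \cref{MainThm3_concurline_puncturing} I would choose coordinates with $H_j=V_+(X_{j-1})$ for $j=1,2,3,4$; this is a $K$-automorphism of $\bP^3_K$, so by \cref{cor:not_pd_heredity} it does not affect potential density. Now $p=H_1\cap H_2\cap H_3=[0:0:0:1]$, and each line through $p$ can be written $L_i=\set{[a_it:c_it:e_it:s]\mid[t:s]\in\bP^1_K}$ with direction $\ell_i\coloneqq[a_i:c_i:e_i]\in\bP^2_K$. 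The part of the hypothesis of \cref{MainThm3_concurline_puncturing} that survives, namely that each $L_i$ avoids the three edges $C_{14},C_{24},C_{34}$ not passing through $p$, is readily checked to be equivalent to $a_i,c_i,e_i\neq 0$, i.e.\ to $\ell_i$ lying off the three coordinate lines of $\bP^2_K$.

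Next I would consider the linear projection $\pi\colon\bP^3_K\setminus\set{p}\to\bP^2_K$, $[X_0:X_1:X_2:X_3]\mapsto[X_0:X_1:X_2]$. It sends $H_j$ $(j\le 3)$ onto the line $\ovl{H}_j\coloneqq V_+(X_{j-1})$, the three $\ovl{H}_j$ being in general position, and it contracts each $L_i$ to its direction $\ell_i$; the remaining plane $H_4$ maps isomorphically onto $\bP^2_K$. Set $Y\coloneqq \ovl{H}_1\cup\ovl{H}_2\cup\ovl{H}_3\cup\set{\ell_1,\dots,\ell_r}$, which is precisely a union of three lines in general position together with finitely many points lying off them. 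By \cref{threelines_points}, after a finite extension of $K$ and an enlargement of $S$ the integral points on $\bP^2_K\setminus Y$ are Zariski dense; I would enlarge $S$ once more so that $\cO_S^{\ast}$ is infinite, the $a_i,c_i,e_i$ are $S$-units, and the reductions of the $L_i$ are well behaved (\cref{lem:bad-reduction-intersection}).

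The heart of the argument is to lift integral points along $\pi$. Given an integral point $[\alpha:\beta:\gamma]\in(\bP^2_K\setminus Y)(\cO_S)$ --- equivalently $\alpha,\beta,\gamma\in\cO_S^{\ast}$ with $[\alpha:\beta:\gamma]$ being $S$-coprime to every $\ell_i$ --- and any $\delta\in\cO_S^{\ast}$, I claim the lift $[\alpha:\beta:\gamma:\delta]$ is an $S$-integral point on $\bP^3_K\setminus D$. All four coordinates are $S$-units, so for every $v\notin S$ the reduction has no vanishing coordinate and hence avoids $H_1,\dots,H_4$. Moreover, since $a_i,c_i,e_i$ are units, $(L_i)_v$ is the line joining the reduction of $p$ to the reduced direction $\ovl{\ell_i}$, and the reduced point --- having nonzero first coordinate --- lies on $(L_i)_v$ if and only if $[\ovl{\alpha}:\ovl{\beta}:\ovl{\gamma}]=\ovl{\ell_i}$ in $\bP^2_{k_v}$. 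By \cref{prop:Coprime-vs-ideals} this equality is excluded at every $v\notin S$ precisely because $[\alpha:\beta:\gamma]$ is $S$-coprime to $\ell_i$. Hence the lift does not reduce to any component of $D$.

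Finally I would deduce Zariski density. Fixing a class $[\alpha:\beta:\gamma]$ and letting $\delta$ range over the infinite group $\cO_S^{\ast}$, the lifts $[\alpha:\beta:\gamma:\delta]$ describe, in the affine chart $X_3\neq 0$, the set $\set{\mu(\alpha,\beta,\gamma)\mid \mu\in\cO_S^{\ast}}$, which is Zariski dense in the line through the origin spanned by $(\alpha,\beta,\gamma)$. Letting $[\alpha:\beta:\gamma]$ vary over the dense set of integral points of $\bP^2_K\setminus Y$, the closure of all lifts therefore contains the affine cone over a Zariski-dense subset of $\bP^2_K$, which is dense in $\mathbb{A}^3$ and hence in $\bP^3_K$. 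This gives the potential density of $(\bP^3_K\setminus D)(\cO_S)$. I expect the only delicate point to be the reduction bookkeeping of the third paragraph --- showing that ``not reducing to $L_i$'' is exactly the planar $S$-coprimality condition against $\ell_i$, which is what allows \cref{threelines_points} to carry the essential content; the coordinate reduction and the density lift are then routine.
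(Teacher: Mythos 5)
Your proof is correct and follows the same strategy as the paper's: after choosing coordinates, both arguments feed the planar configuration of three lines in general position plus the $r$ points coming from the $L_i$ into \cref{threelines_points} (the paper works inside the screen $H_4$, you work in the target of the projection from $p$; the two pictures are identified by restricting that projection to $H_4$), and then sweep out $\bP^{3}_K$ by the lines through $p$ over the resulting dense set of integral points. The one genuine difference is the final step: the paper shows each line $l_q$ is fully integral and invokes Beukers' lemma via \cref{prop:Beukers_lem_divisor}, which requires enlarging $S$ so that $\cO_S$ is a principal ideal domain, whereas you exhibit the integral points on each fiber explicitly as $[\alpha:\beta:\gamma:\delta]$ with $\delta\in\cO_S^{\ast}$, checking directly (via \cref{prop:Coprime-vs-ideals}) that they avoid the reductions of the $H_j$ and of each $L_i$. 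Your variant is slightly more self-contained and dispenses with the PID hypothesis; the reduction bookkeeping you flag as the delicate point is indeed the only place where $S$ must be enlarged beyond what \cref{threelines_points} already requires, and it is handled the same way in both arguments.
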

\begin{proof}
 Let $(i,j,k) = (1,2,3)$ for instance, and let $L_i \cap H_4 = \set{x_i}$. Then the integral points on 
 \[ H_4\setminus{(D\cap H_4)} = H_4\setminus{((\Supp(H_1+H_2+H_3)\cap H_4) \cup \set{x_1,\dots, x_r})}
 \]
 are potentially dense by \cref{threelines_points}. Enlarging the set $S$, we may suppose the following: 
 \begin{itemize}
 \item $\cO_S$ is a principal ideal domain and the unit group $\cO_S^{\ast}$ is infinite. 
 \item The set $(H_4\setminus{(H_4\cap D)})(\cO_S)$ is Zariski dense in $H_4$. 
 \item $(L_i)_v \cap (H_4)_v = (L_i \cap H_4)_v =: \set{(x_i)_v}$ and $p_v \notin (H_4)_v$. (By \cref{lem:bad-reduction-intersection})
 \end{itemize} 
 Let $q\in (H_4\setminus{(H_4\cap D)})(\cO_S)$ be an integral point, and let $l_q$ be the line connecting $p$ and $q$. If $q_v \in (L_i)_v$, then we have $q_v \in (H_4)_v \cap (L_i)_v = \set{(x_i)_v}$, a contradition. 
 Therefore, it follows that $(l_q)_v\cap D_v = \set{p_v, q_v}$. Since $p_v$ is different from $p_v\in (H_4)_v$, the straight line $l_q$ is fully integral. So we easily find that the integral points on $\bP^{3}_{K}\setminus{D}$ are potentially dense by applying \cref{prop:Beukers_Lemma} to the line $l_q$.
\end{proof}
Note that the coordinate of the point $x_{j,k,l}$ we constructed in \cref{MainThm3_concurline_puncturing} refers to $b\in K$, which is the first coordinate of the intersection point $p$ of $L_1,\dots,L_r$. So, without the concurrency, it seems to be difficult to construct a point in the same way as $x_{j,k,l}$. 

\subsection{On 3-dimensional non-normal crossings cases}
The condition $\# (L\cap Q) = 2$ is necessary for our proof of \cref{MainThm1_n-1planes_plus_1quadric}, because it implies that the hyperplanes $\Pi_1, \dots ,\Pi_{n-1}, \Pi_T$ are in general position. Here we work on $\bP^{3}$, and we remove the condition “normal crossings” of $D = H_1 + H_2 + Q$. 

Let $L\coloneqq H_1\cap H_2$ be the line. We consider two cases below: 
\begin{enumerate}
\item $L\subset Q$.
\item $L \subset T_{p}Q$ and $L\not\subset Q$. 
 \end{enumerate}

For (1), we prove that the integral points on $\bP^{3}\setminus{D}$ remains to be potentially dense. The method is the same (and even easier) as \cref{MainThm1_n-1planes_plus_1quadric} and becomes even easier. 
\begin{prop} 
Let $D$ be a divisor on $\bP^{3}_K$ of the form $D \coloneq H_1 + H_2 + Q.$
Here, $H_1, H_2$ are two distinct hyperplanes and $Q$ is a smooth quadric hypersurfaces, all defined over $K$. Suppose that the line $L\coloneqq H_1\cap H_2$ is contained in $Q$. 
Then the integral points of $\bP^3\setminus{D}$ are potentially dense. 
\end{prop}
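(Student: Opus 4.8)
The plan is to run the argument of \cref{MainThm1_n-1planes_plus_1quadric} almost verbatim, projecting from a carefully chosen point $p$ of $L$; the only genuinely new input is the choice of $p$. Since potential density permits a finite extension of $K$, I would first enlarge $K$ so that $Q$ is split, i.e. $Q\cong \bP^1\times \bP^1$ with $L$ one of the two rulings. Each hyperplane section $H_i\cap Q$ is a conic containing the line $L$, hence degenerates as $H_i\cap Q = L + L_i'$ with $L_i'$ a line of the opposite ruling; moreover $L_1'\neq L_2'$, because $(H_1\cap Q)\cap(H_2\cap Q) = (H_1\cap H_2)\cap Q = L$ forces the two residual lines to be distinct. (If some $H_i\cap Q$ happens to be a double line, the residual line degenerates to $L$ and the construction below only simplifies.)

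The crucial step is to take $p\coloneqq L\cap L_1'$, the singular point of the degenerate conic $H_1\cap Q$. This is a $K$-rational point at which $H_1$ is tangent to $Q$, so that $T_pQ = H_1$: indeed $H_1\cap Q = L + L_1'$ is exactly the union of the two rulings through $p$, which is $T_pQ\cap Q$. Projecting from $p$ as in the proof of \cref{intpt_on_Q} then gives an isomorphism $Q\setminus(T_pQ\cap Q) = Q\setminus(H_1\cap Q)\xrightarrow{\cong}\bP^2_K\setminus \Pi_1$, and removing in addition $H_2\cap Q = L + L_2'$ corresponds to deleting the image line $\pi(L_2')$, which is distinct from $\Pi_1$ since $L_2'\not\subset H_1$. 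Writing $U\coloneqq Q\setminus((H_1\cup H_2)\cap Q)$, I thus obtain $U\cong \bP^2_K\setminus(\Pi_1\cup \pi(L_2'))\cong \mb{A}^1\times \mb{G}_m$, whose $S$-integral points $\cO_S\times\cO_S^{\ast}$ are Zariski dense once $\cO_S^{\ast}$ is infinite; transporting this back through \cref{cor:not_pd_heredity} shows that the integral points on $U$ are potentially dense. This is exactly where the case is easier than \cref{MainThm1_n-1planes_plus_1quadric}: the tangency $T_pQ=H_1$ makes two of the three lines in the $\bP^2$-picture coincide, so only two survive. The hard part is to notice that a \emph{generic} $p\in L$ is fatal — it yields $U\cong \mb{A}^1\times(\bP^1_K\setminus\set{\text{$3$ points}})$, whose integral points are finite by the unit-equation argument of \cref{ex:P^1-case} — and that the choice $p=L\cap L_1'$ is precisely what rescues density.

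With this $U$ in hand the rest copies \cref{MainThm1_n-1planes_plus_1quadric}. I would enlarge $S$ so that $\cO_S$ is a principal ideal domain with infinite unit group, so that the relevant reductions of intersections agree (via \cref{lem:bad-reduction-intersection}), and so that $U(\cO_S)$ is Zariski dense in $Q$. For $q\in U(\cO_S)$ let $l_q$ be the line through $p$ and $q$. Because $p$ lies on $H_1$, $H_2$ and $Q$ while $q$ lies on $Q$ and off $H_1,H_2$, the line $l_q$ meets $D = H_1+H_2+Q$ only at $p$ and $q$, and these account for the full intersection of degree $\deg D = 4$. Since $p$ reduces into $(H_1\cap Q)_v$ whereas $q$, being integral on $U$, reduces away from it, the points $p,q$ are $S$-coprime; and $q_v\notin (H_1\cap Q)_v\cup(H_2\cap Q)_v$ forces $(l_q)_v\not\subset D_v$ for every $v\notin S$. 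Hence $l_q$ is fully integral by \cref{prop:Beukers_lem_divisor}, and $l_q(K)\cap(\bP^3_K\setminus D)(\cO_S)$ is infinite.

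Finally I would conclude as in \cref{MainThm1_n-1planes_plus_1quadric}: setting $Z\coloneqq \overline{\bigcup_{q\in U(\cO_S)}l_q}\subset \overline{(\bP^3_K\setminus D)(\cO_S)}$ and projecting from $p$ onto an arbitrary plane $H\not\ni p$, the dense set $U(\cO_S)$ maps to a Zariski dense subset of $H$ contained in $Z$, so $Z$ contains every such $H$ and therefore $Z=\bP^3_K$. This yields the asserted potential density.
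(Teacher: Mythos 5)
Your proof is correct and follows essentially the same route as the paper: the paper likewise writes $H_i\cap Q = L\cup L_i$, takes $p$ to be the intersection point of $L$ and the residual line of $H_1\cap Q$ (so that $H_1 = T_{p}Q$), projects from $p$ to get $Q\setminus{(H_1\cup H_2)}\cong \mb{A}^1\times \G_m$, and then sweeps out $\bP^{3}_K$ with fully integral lines through $p$ exactly as you do. Your preliminary splitting of $Q$ and the aside about a generic $p\in L$ failing are extra but harmless.
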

\begin{proof}
Let us write $Q\cap H_i = L \cup L_i$, where $L_i$ is a straight line distinct from $L$. 
Let $p_i$ ($i=1,2$) be the point at which the lines intersect. Note that $H_i = T_{p_i}Q$. The projection from $p_1$ induces isomorphisms $Q\setminus{H_1} \xrightarrow{\cong} \bP^{2}\setminus{\Pi_1}$ and 
\[
Q\setminus{(H_1\cup H_2)} \cong \bP^{2}\setminus{(\Pi_1\cup \Pi_2)} \cong \mathbb{A}^1\times \mathbb{G}_m. 
\]
Thus $Q\setminus{(H_1\cup H_2)}$ has potentially dense set of $S$-integral points. For any $S$-integral points $q$ on the complement,  let $l_q$ be the line passing through $q$ and $p_1$. Since $(l_q)_v \not\subset (H_1\cup H_2 \cup Q)_v$ for all $v\notin S$, we have $(l_q\cap D)_v = \set{p_{1v}, q_v} = (l_q)_v \cap D_v$ and $q, p_{1v}$ are $S$-coprime. By \cref{prop:Beukers_lem_divisor}, the line $l_q$ has infinitely many $S$-integral points of $Q\setminus{(H_1\cup H_2)}$. The same argument in \cref{MainThm1_n-1planes_plus_1quadric} shows that $(\bP^{3}\setminus{D})(\cO_S)$ is Zariski dense, after enlarging $S$ so that $(Q\setminus{(H_1\cup H_2)})(\cO_S)$ is Zariski dense. 
\end{proof}

\begin{ex}
Let $H_1 = V_+(X_1)$, $H_2=V_+(X_2)$ and $Q = V_+(X_0X_1 + X_2X_3)$ be smooth hypersurfaces in $\mathbb{P}^{3}_K$. Then the intersection $H_1 \cap H_2\cap Q = V_+(X_1)\cap V_+(X_2)$ is a line. So the integral points on $\bP^{3}_{K}\setminus{\Supp (H_1+H_2+Q))}$ is potentially dense. 
\end{ex}

For (2), we have not determined whether integral points are potentially dense or not. The projection from an intersection point of components can not produce a Zariski dense set of integral points, because the corresponding lines $\Pi_1,\Pi_2,\Pi_T$ (see \cref{intpt_on_Q}) are concurrent. So we propose the following.
\begin{prob}
Let $D$ be a divisor on $\bP^{3}_K$ of the form $D \coloneq H_1 + H_2 + Q.$
Here, $H_1$ and $H_2$ are two distinct hyperplanes and $Q$ is a smooth quadric hypersurfaces, all defined over $K$. Suppose that the line $L\coloneqq H_1\cap H_2$ tangent to $Q$ at a $K$-rational point. 
Is the integral points of $\bP^3\setminus{D}$ are not potentially dense?  
\end{prob}
Since the divosor $D$ above is not normal crossing divisor, the computation of logarithmic kodaira dimension $\overline{\kappa}(\bP^{3}\setminus{D})$ becomes more subtle. Like as Levin-Yasufuku's work \cite{LY16}, a (3-dimensional) characterization of complements which connects potential density to the value $\overline{\kappa}$ is awaited.

\subsection*{Acknowledgements}
The author would like to thank his advisor, Tetsushi  Ito, for helping to progress my research and writing, and Yu Yasufuku for useful discussions and warm encouragement.

\bibliography{teranishi}

\end{document}